\newtheorem{thm}{Theorem}[section]
\newtheorem{lem}[thm]{Lemma}
\newtheorem{prop}[thm]{Proposition}
\numberwithin{equation}{section}
\newcommand{\wt}{\widetilde}
\newcommand{\al}{\alpha}
\newcommand{\Ga}{\Gamma}
\newcommand{\ep}{\varepsilon}
\begin{document}
\title[cubic fractional  Schr\"odinger equation]{Well-posedness and Ill-posedness \\ for {the}
cubic fractional Schr\"odinger equations}
\author[Y. Cho]{Yonggeun Cho}
\address{Department of Mathematics, and Institute of Pure and Applied Mathematics, Chonbuk National University, Jeonju 561-756, Republic of Korea}
\email{changocho@jbnu.ac.kr}
\author[G. Hwang]{Gyeongha Hwang}
\address{Department of Mathematical Sciences, Ulsan National Institute of Science and Technology, Ulsan, 689-798, Republic of Korea}
\email{ghhwang@unist.ac.kr}
\author[S. Kwon]{Soonsik Kwon}
\address{Department of Mathematical Sciences, Korea Advanced Institute of Science and Technology, Daejeon 305-701, Republic of Korea} \email{soonsikk@kaist.edu}
\author[S. Lee]{Sanghyuk Lee}
\address{Department of Mathematical Sciences, Seoul National University, Seoul 151-747, Republic of Korea}
\email{shklee@snu.ac.kr}


\subjclass{Primary : 35Q55, 35Q40}
\keywords{fractional Schr\"odinger equation, cubic nonlinearity, well-posedness, ill-posedness.}
\thanks{}

\begin{abstract}
We study the low regularity well-posedness of the 1-dimensional
cubic nonlinear fractional Schr\"odinger equations with L\'{e}vy
indices  $1 < \al < 2$. We consider both non-periodic and periodic
cases, and prove that the Cauchy problems are locally well-posed in
$H^s$ for $s \geq \frac {2-\al}4$. This is shown via a trilinear
estimate in Bourgain's $X^{s,b}$ space. We also show that
non-periodic equations are ill-posed  in $H^s$ for $\frac {2 -
3\al}{4(\al + 1)} < s < \frac {2-\al}4$ in the sense that the flow
map is not locally uniformly continuous.
\end{abstract}

\maketitle
\section{Introduction}

We consider the Cauchy problem for {the} one dimensional fractional
Schr\"odinger equations with cubic nonlinearity in periodic  and non
periodic settings:
\begin{align}\label{eq}
\left\{\begin{array}{l}
i\partial_tu + (-\Delta)^{\al/2}u = \gamma|u|^2u,\\
u(0,\cdot) = \phi \in H^s(\widehat Z),
\end{array}
\right.
\end{align}
where $\widehat Z = \mathbb R$ or $\mathbb T$, $\alpha\in (1,2)$ is
the L\'{e}vy index, $\gamma \in \mathbb R \setminus \{0\}$ and $s
\in \mathbb R$. In this paper we are concerned with well-posedness
of the Cauchy problem in low regularity Sobolev spaces. As the
linear part generalizes the usual second-order Schr\"odinger
equation, our interest  is to investigate how the weaker dispersion
affects dynamics and well-posedness.
 The fractional Schr\"odinger
equations was introduced in {the} theory of the fractional quantum
mechanics where the Feynmann path integrals approach is generalized
to $\alpha$-stable L\'evy process \cite{la1}. Also it appears in the
water wave models (for example, see \cite{iopu} and references
therein).

In what follows $Z$ denotes $ \mathbb R$ (non-periodic) or $\mathbb
Z$ (periodic). Accordingly, the Sobolev space $H^s(\widehat Z)$ is
defined by
$$
H^s(\widehat Z) = \big\{f \in \mathcal S' : \|f\|_{H^s(\widehat Z)} := \|(1 + |\xi|^2)^\frac s2 \mathcal{F} f \,\|_{L^2(Z)} < \infty\big\},
$$
where $L^2( Z)$ denotes $L^2(\mathbb R)$ or  $\ell^2(\mathbb Z)$ and
$\mathcal{F} f$ is {the} Fourier transform or Fourier coefficient of
$f$ given by $\mathcal{F} f(\xi) = \int_{\widehat Z} e^{- i x \xi} f
dx$ for $\xi \in Z$.

We {define the linear propagator $U(t)$ by setting}
\[U(t)\phi = e^{i(-\Delta)^{\al/2}t}\phi = \mathcal{F}^{-1} e^{i|\xi|^{\al}t} \mathcal{F}\phi,\]
where $\mathcal F^{-1}$ denotes the inverse Fourier transform. Then,
by {Duhamel's} formula the equation \eqref{eq} is written as an
integral equation \begin{equation}\label{integral} u = U(t)\phi -
i\gamma \int_0^t U(t-t')(|u|^2u(t'))dt'.\end{equation}

\subsubsection*{Well-posedness} If $s > 1/2$, by the Sobolev embedding and the energy
method one can easily show the local well-posedness in $H^s$ for $0
< \al < 2$ for both periodic and non periodic cases. The equation
\eqref{eq} also has the mass and energy conservation:
$$  M(u)= \int |u|^2, \qquad E(u) = \frac 12 \int | |\nabla|^{\al/2}u|^2  + \gamma\frac 14 \int |u|^4. $$
Thus, for $s \ge {\al}/2$ and $s > 1/2$, the global well-posedness
in $H^s$ follows from the conservation laws. (For instance see
\cite{chho, chho2}.)

For the less regular initial data, i.e. $s\le 1/2$,  particularly in
the non periodic case, {a plausible approach may be to use the
Strichartz estimate for $U(t)$.} In fact, it is known that the
estimate
\begin{align}\label{str}
\||\nabla|^{-\frac{2-\alpha}{q}}U(t)\phi\|_{L^q_tL^r_x(\mathbb R
\times \mathbb R)} \lesssim \|\phi\|_{L^2_x}\end{align} holds for
$2/q + 1/r = 1/2,\; 2\le q, r \le \infty$ (see \cite{cox}). However,
due to weak dispersion the estimate accompanies a derivative loss of
order $2$-$\alpha$ unless one imposes additional assumptions on
$\phi$ (\cite{chkl1,cholee}). This makes difficult for general data
to use the usual iteration argument which relies on \eqref{str}.

To get around the shortcoming of Strichartz estimates we use
Bourgain's $X^{s,b}$ space, which  has been widely used in the
studies of dispersive equations for both non periodic and periodic
setting. For the fractional Schr\"odinger equation, $X_{\widehat
Z}^{s,b}$ is defined by
\[X_{\widehat Z}^{s,b} = \big\{\varphi \in \mathcal S' : \|\varphi\|_{X_{\widehat Z}^{s,b}} := \|\langle \xi \rangle^s \langle \tau - |\xi|^\al \rangle^b \widehat \varphi (\tau, \xi)
\|_{L^2(\mathbb R \times Z)} < \infty\big\}, \] where $\widehat
\varphi (\tau, \xi)$ is the Fourier transform of $\varphi$ with
respect to the time and space variables. Here $\langle \cdot
\rangle$ denotes $1 + |\cdot|$. For the standard iteration argument,
the main step is to show the trilinear estimate in terms of
$X^{s,b}$ spaces:
\begin{equation}\label{trili}
  \| uvw\|_{X^{s,b-1}_{\widehat Z}} \lesssim \|u\|_{X^{s,b}_{\widehat Z}}\|v\|_{X^{s,b}_{\widehat Z}}
\|w\|_{X^{s,b}_{\widehat Z}}. \end{equation} We obtain  this
estimate by adapting the dyadic method in Tao \cite{tao} in which
multilinear estimates in weighted $L^2$ spaces are systematically
studied. The argument similarly applies to both non periodic and
periodic cases.

The following is our local well-posedness result.



\begin{thm}\label{main1}
For $1 < \al <2$, the Cauchy problem \eqref{eq} is locally well-posed in $H^s(\widehat Z)$, if $s \geq \frac {2 - \al}4$.
\end{thm}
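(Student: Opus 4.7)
The plan is to reduce the well-posedness claim to the trilinear estimate \eqref{trili} by the standard contraction argument on Bourgain spaces, and then to prove \eqref{trili} via the dyadic method of Tao \cite{tao}. Fix $b = \frac{1}{2}+\varepsilon$ for a small $\varepsilon>0$ and work in the time-restricted space $X^{s,b}_{T}$ defined in the usual way through a smooth cutoff $\eta_T$. The standard linear and Duhamel estimates
\[
\|\eta(t)U(t)\phi\|_{X^{s,b}_{\widehat Z}} \lesssim \|\phi\|_{H^s(\widehat Z)},\qquad
\Big\|\eta(t/T)\int_0^t U(t-t')F(t')\,dt'\Big\|_{X^{s,b}_{\widehat Z}} \lesssim T^{\theta}\|F\|_{X^{s,b-1}_{\widehat Z}}
\]
for some $\theta>0$ are insensitive to $\alpha$ and follow by routine arguments. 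Combined with \eqref{trili}, they turn the Duhamel map $u\mapsto \eta(t)U(t)\phi - i\gamma\eta(t/T)\int_0^t U(t-t')(|u|^2u)\,dt'$ into a contraction on a small ball of $X^{s,b}_{T}$ once $T$ is chosen small in terms of $\|\phi\|_{H^s}$, yielding local existence, uniqueness, and continuous dependence.

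The real work is \eqref{trili}. Using Plancherel and duality against a test function $f\in X^{-s,1-b}_{\widehat Z}$, the estimate is equivalent to a four-linear form bound
\[
\Big|\int_{\substack{\xi=\xi_1+\xi_2+\xi_3 \\ \tau=\tau_1+\tau_2+\tau_3}} \widehat u(\tau_1,\xi_1)\widehat v(\tau_2,\xi_2)\widehat w(\tau_3,\xi_3)\overline{\widehat f(\tau,\xi)}\Big| \lesssim \|u\|_{X^{s,b}_{\widehat Z}}\|v\|_{X^{s,b}_{\widehat Z}}\|w\|_{X^{s,b}_{\widehat Z}}\|f\|_{X^{-s,1-b}_{\widehat Z}}
\]
(conjugations inserted as appropriate to address $|u|^2u$). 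I would Littlewood--Paley decompose each factor dyadically in both frequency ($|\xi_j|\sim N_j$, $|\xi|\sim N$) and modulation ($|\sigma_j|:=|\tau_j-|\xi_j|^\alpha|\sim L_j$, $|\sigma|\sim L$), and reduce matters to a single dyadic estimate to be summed over the dyadic parameters. The key algebraic input is the resonance identity
\[
\sigma - \sigma_1 - \sigma_2 - \sigma_3 \;=\; |\xi_1|^\alpha + |\xi_2|^\alpha + |\xi_3|^\alpha - |\xi|^\alpha,
\]
which forces $L_{\max}:=\max(L,L_1,L_2,L_3)$ to dominate the right-hand side in absolute value.

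I expect the main obstacle to lie in the near-coherent high-high-high to high regime, where cancellations can make the right-hand side of the resonance identity much smaller than $N_{\max}^\alpha$; by the mean value theorem it is of order at least $N_{\max}^{\alpha-1}\cdot d$ with $d$ the smallest gap among the frequencies, and it is precisely the loss of a factor $N_{\max}^{\alpha-1}$ rather than $N_{\max}$ (compared with the classical case $\alpha=2$) that forces the threshold $s\ge (2-\alpha)/4$. In this regime I would pair the modulation gain coming from $L_{\max}^{b-1/2}$ with an $L^2$-based bilinear/counting estimate: over $\mathbb R$ one bounds the measure of the set of frequency pairs constrained by the parabolic-type surface $\tau = |\xi|^\alpha$, while over $\mathbb Z$ one replaces this by a lattice-point count of analogous type. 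Balancing these gains against the $\xi$-weights $\langle \xi_j\rangle^{-s}\langle\xi\rangle^s$ and summing the resulting geometric series in $N_j$, $N$, $L_j$, $L$ converges exactly at $s=(2-\alpha)/4$ (with $b$ slightly above $1/2$), producing Theorem~\ref{main1}. The non-periodic and periodic arguments proceed in parallel; only the counting step requires additional care in the periodic setting, and it yields the same critical exponent.
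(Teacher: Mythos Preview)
Your outline is correct and lands on the same framework as the paper: contraction in $X^{s,b}$ with $b=\tfrac12+\varepsilon$, reduction to a multilinear weighted $L^2$ estimate, dyadic decomposition in frequency and modulation following Tao~\cite{tao}, and identification of the high--high--high to high resonance as the bottleneck responsible for the threshold $s=\tfrac{2-\alpha}{4}$. The one substantive difference is structural: you propose to attack the $[4;\mathbb R\times Z]$-multiplier directly, decomposing all four factors and summing over $N_1,N_2,N_3,N,L_1,L_2,L_3,L$. The paper instead proves a \emph{bilinear} estimate (Proposition~\ref{bilinear2}),
\[
\|u\overline v\|_{L^2(\mathbb R\times\widehat Z)}\lesssim \|u\|_{X^{0,\frac12-\varepsilon}_{\widehat Z}}\|v\|_{X^{s,\frac12+\varepsilon}_{\widehat Z}},
\]
and then obtains the trilinear bound in one line from Tao's composition lemma (Lemma~\ref{compo}), pairing $u_1\overline{u_2}$ against $u_3\overline{u_4}$. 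This buys a considerable reduction in case analysis: one works with a $[3;\mathbb R\times Z]$-multiplier and the resonance function $h(\xi)=|\xi_1|^\alpha-|\xi_2|^\alpha+|\xi_3|^\alpha$, with the detailed dyadic bounds organized in Proposition~\ref{dyest2} according to which $L_j$ is maximal and the relative sizes of the $N_j$. Your direct four-linear route would also succeed, but the case tree is larger and the summation more delicate; the paper's asymmetric bilinear estimate (one factor carrying $X^{0,\frac12-\varepsilon}$, the other $X^{s,\frac12+\varepsilon}$) is precisely what makes the composition step lossless. One small correction: for the actual nonlinearity $u\overline u u$ the resonance function has alternating signs, so the identity you wrote should read $|\xi_1|^\alpha-|\xi_2|^\alpha+|\xi_3|^\alpha-|\xi|^\alpha$ (you flagged this with ``conjugations inserted as appropriate,'' but the sign pattern matters for the case analysis).
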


Recently, for the periodic case,  Demirbas,  Erdo\u{g}an and
Tzirakis \cite{det} showed that the equation \eqref{eq} is locally
well-posed for $s > \frac {2-\al}{4}$ and globally wellposed for $s
> \frac {5\al + 1}{12}$. Our result gives local well-posedness at the missing endpoint $s=\frac
{2-\al}{4}$.

The regularity threshold $ s= \frac{2-\al}{4}$ is optimal in that
below that number we do not expect to solve \eqref{eq} via the
contraction mapping principle. Firstly, the estimate \eqref{trili}
fails for $s<\frac{2-\al}{4} $ due to the resonant interaction of
high--high--high to high (frequencies). Compared to the usual
Schr\"odinger equation, the curvature of the characteristic curve is
smaller ($ (\text{frequency})^{\al-2} $). So, the stronger such
resonant interactions make the threshold regularity higher. See the
counter-example in Section 4. In \cite{gh}, the authors claimed that
\eqref{eq} is globally well posed if $\phi\in L^2$. But Theorem
\ref{ill} below shows that their result is incorrect. Their proof is
based on a trilinear estimate, namely \eqref{tri} with $s=0$
(\cite[Theorem 3.2]{gh}), which is not true.

\smallskip

\subsubsection*{Ill-posedness} Now we consider ill-posedness in the non periodic
setting. Following Christ, Colliander, and Tao \cite{cct}, we
approximate the fractional equations with the cubic NLS, at
$(N,N^\al)$ in the Fourier space by Taylor expansion of the phase
function. This allows to transfer an ill-posedness result of NLS to
\eqref{eq}. A similar trick was also used in the fifth-order
modified KdV equation \cite{kwon}. The following is our second
result.

\begin{thm}\label{ill}
Let $\frac {2 - 3\al}{4(\al + 1)} < s <  \frac {2 - \al}4$. Then the
solution map of the initial value problem \eqref{eq} fails to be
locally uniformly continuous on $C_TH^s(\mathbb R)$ for any $T>0$.
More precisely, for $0 < \delta \ll \ep \ll 1$ and $T > 0$
arbitrary, there are two solutions $u_1, u_2$ to \eqref{eq} with
initial data $\phi_1, \phi_2$ such that
\begin{align}\label{ill-1}
&\|\phi_1\|_{H^s},\|\phi_2\|_{H^s} \lesssim \ep\,,
\\
\label{ill-2} &\|\phi_1 - \phi_2\|_{H^s} \lesssim \delta\,,
\\
\label{ill-3} \sup_{0 \leq t \leq T}&\|u_1(t) - u_2(t)\|_{H^s}
\gtrsim \ep.
\end{align}
\end{thm}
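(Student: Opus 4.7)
The plan is to follow the Christ--Colliander--Tao approach \cite{cct} adapted to the non-periodic fractional setting. The idea is that, in a frequency-$N$ window with $N \gg 1$, the symbol $|\xi|^\al$ is well approximated by a quadratic polynomial in $\xi - N$, so in a frame moving at the group velocity $\al N^{\al-1}$ the equation \eqref{eq} reduces to an approximate cubic NLS. I would construct a pair of approximate solutions to this model NLS that exhibit decoherence on the prescribed time interval, transplant them back to \eqref{eq}, and close a perturbative argument to upgrade them to genuine solutions satisfying \eqref{ill-1}--\eqref{ill-3}.

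Concretely, I introduce the ansatz
$$u(t,x) = \lambda\, e^{i(Nx - N^\al t)}\, w(\mu t,\, \nu(x - \al N^{\al-1} t)),$$
and Taylor expand
$$|\xi|^\al = N^\al + \al N^{\al-1}(\xi - N) + \tfrac{\al(\al-1)}{2} N^{\al-2}(\xi - N)^2 + O(N^{\al-3}(\xi - N)^3).$$
Choosing the rescaling parameters $\lambda, \mu, \nu$ so that the quadratic term normalizes to $\partial_x^2$ and the nonlinearity acquires an $O(1)$ coefficient, the profile $w$ satisfies
$$i\partial_t w + \partial_x^2 w = \gamma |w|^2 w + \mathcal E,$$
with $\mathcal E$ consisting solely of the dispersive remainder generated by the tail of the Taylor series, whose size in terms of $N$ is controlled by the frequency support of $w$.

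For the model NLS I would use the explicit family of spatially constant solutions $w_a(t,x) = a\, e^{-i\gamma |a|^2 t}$ truncated by a slowly varying bump $\chi_L$ of width $L$; two choices $a_1, a_2$ with $|a_1 - a_2| \ll |a_1| \asymp |a_2|$ yield $L^2$-close data at $t = 0$ yet phases that fully separate once $\gamma(|a_1|^2 - |a_2|^2) T \gtrsim 1$. Undoing the scaling and the Galilean boost produces two initial data $\phi_1, \phi_2 \in H^s(\mathbb R)$ localized near frequency $N$, and evaluating Sobolev norms expresses each of $\|\phi_k\|_{H^s}$, $\|\phi_1 - \phi_2\|_{H^s}$, and the separation $\|u_1^{\mathrm{app}}(T) - u_2^{\mathrm{app}}(T)\|_{H^s}$ as an explicit product of powers of $\lambda, \nu, L, N^s$ and the $a_k$'s.

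Finally, I would show that the true solutions $u_k$ of \eqref{eq} starting from $\phi_k$ stay in a small $C_T H^s$-neighborhood of $u_k^{\mathrm{app}}$ on $[0,T]$. This amounts to a contraction in an $X^{s,b}$-space adapted to the boosted frame, using the trilinear estimate \eqref{trili} from Theorem \ref{main1} to absorb the cubic nonlinearity, and treating both $\mathcal E$ and the commutator generated by the cutoff $\chi_L$ as source terms. The main obstacle is parameter calibration: the three requirements \eqref{ill-1}--\eqref{ill-3}, together with the smallness of the perturbation error, produce a system of inequalities in $\log N$; balancing them --- in particular the competition between the dispersive remainder of size $\sim N^{\al-3}$ times the appropriate number of $\nu$'s, and the required $H^s$ separation of size $\ep$ --- is precisely what should single out the lower bound $s > \frac{2 - 3\al}{4(\al + 1)}$.
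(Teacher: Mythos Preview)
Your strategy is the paper's: Taylor-expand $|\xi|^\al$ at $\xi=N$, pass to cubic NLS in a moving frame, import the NLS decoherence examples, and close by perturbation. The paper packages the NLS side as a black box (Theorem~\ref{ill-NLS}, which also supplies uniform $H^5$ control on the profiles), whereas you build the truncated plane waves $a\,e^{-i\gamma|a|^2 t}\chi_L$ by hand as in \cite{cct}; either route works, and the black box simply spares you the commutator terms from $\chi_L$.

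There is one genuine gap. You propose to keep $u_k$ close to $u_k^{\mathrm{app}}$ by ``a contraction in an $X^{s,b}$-space \dots\ using the trilinear estimate \eqref{trili},'' but \eqref{trili} is available only for $s\ge\tfrac{2-\al}{4}$; its failure below that threshold is exactly the content of the counter-example in Section~4, so no contraction at the target $s$ exists. The paper's fix is a two-step procedure: first run the perturbation at the threshold regularity $\tfrac{2-\al}{4}$ on an $O(1)$ time interval (Lemma~\ref{pertub}), where the Taylor remainder produces an error of size $\ep N^{-\al/2}$ in $X^{(2-\al)/4,\,1/2+}$ (Lemma~\ref{error}); then apply a \emph{further} dilation $u\mapsto\lambda u(\lambda^\al t,\lambda x)$ with $\lambda=N^{((2-\al)/4-s)/(s+1/2)}$, chosen so that the $H^s$ norms of the rescaled data are $O(\ep)$. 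After rescaling, the approximation error in $H^s$ becomes $\lambda^{\max(s,0)+1/2}\ep N^{-\al/2+}$, and demanding this be $\ll\ep$ is precisely what forces $s>\tfrac{2-3\al}{4(\al+1)}$. Your three parameters $\lambda,\mu,\nu$ can in fact absorb this extra dilation (your two normalizations leave one degree of freedom), but as written the proposal fixes them to normalize the model NLS and then attempts the iteration at a regularity where it cannot close.
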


In view of the counter-example of the trilinear estimate
\eqref{trili} it seems natural to expect the similar ill-posedness
result for the periodic equations. However, it is not so simple to
set make up a counter example because  the frequency supports are
distributed in a wide region of length $N^{\frac{2-\al}{2}}$.
Currently we are not able to prove ill-posedness\,\footnote{ The
same counter-example as the cubic NLS in \cite{bst} gives the
ill-posedness for $ s<0$.  }.

\subsubsection*{Organization of the paper} The paper is organized as follows. In section 2, we
introduce notations and recall previously known estimates which we
need in the subsequent section. In section 3, bilinear estimates in
$X_{\widehat Z}^{s,b}$ space  are established. Finally, we prove
Theorem \ref{main1} in section 4 and Theorem \ref{ill} in section 5.

\section{Notations and Preliminaries}
We will use the same notations as in \cite{tao}. Let us invoke that
$Z$ denotes $\mathbb R$ for the non-periodic case and $\mathbb Z$
for the periodic case. For any integer $k \ge 2$, let $\Ga_k(\mathbb
R \times Z)$ denote the hyperplane
\[\Ga_k(\mathbb R \times Z) := \{\zeta = (\zeta_1, \cdots, \zeta_k) \in (\mathbb R \times Z)^{k} : \zeta_1 + \cdots + \zeta_k = 0\}
\]
with \[\int_{\Ga_k(\mathbb R \times Z)} f:= \int_{(\mathbb R \times
Z)^{k-1}} f(\zeta_1, \cdots, \zeta_{k-1}, -\zeta_1 - \cdots -
\zeta_{k-1}) d\zeta_1 \cdots d\zeta_{k-1},\] where $d\zeta_j$ is the
product of Lebesgue and the counting measure for the periodic case,
and the Lebesgue measure on $\mathbb R^2$ for the non-periodic case.
Note that the integral is symmetric under permutations of $\zeta_j$.

Let us define a $[k; \mathbb R \times Z]$-multiplier to be any
function $m : \Ga_k(\mathbb R \times Z) \rightarrow \mathbb{C}$.
When $m$ is a $[k; \mathbb R \times Z]$-multiplier, the norm
$\|m\|_{[k;\mathbb R \times Z]}$ is defined to be  the best constant
so that the inequality
\[|\int_{\Ga_k(\mathbb R \times Z)} m(\zeta) \prod_{j=1}^k f_j(\zeta_j)| \le \|m\|_{[k;\mathbb R \times Z]} \prod_{j=1}^k \|f_j\|_{L^2(\mathbb R \times Z)}\]
holds for all test functions $f_j$ on $\mathbb R \times Z$.  Here we
recall some of the results about $[k; \mathbb R \times
Z]$-multiplier from \cite{tao},  which is to be used later.
\begin{lem}\label{compa}
If $m$ and $M$ are $[k; \mathbb R \times Z]$-multipliers, and
$|m(\zeta)| \leq M(\zeta)$ for all $\zeta \in \Ga_k(\mathbb R \times
Z)$, then $\|m\|_{[k; \mathbb R \times Z]} \leq \|M\|_{[k; \mathbb R
\times Z]}$. Also, if $m$ is a $[k; \mathbb R \times Z]$-multiplier,
and $g_1, \cdots, g_k$ are functions from $\mathbb R \times Z$ to
$\mathbb{R}$, then
\[\big\|m(\zeta)\prod_{j=1}^k g_j(\zeta_j)\big\|_{[k; \mathbb R \times Z]} \le \|m\|_{[k; \mathbb R \times Z]} \prod_{j=1}^k \|g_j\|_{\infty}.\]
\end{lem}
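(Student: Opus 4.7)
The plan is to read off both parts of the lemma directly from the definition of the $[k;\mathbb R\times Z]$-multiplier norm as the smallest constant making the multilinear testing inequality hold; no dispersive information or oscillatory input is used.

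For the first claim, I would fix arbitrary test functions $f_1,\dots,f_k$ on $\mathbb R\times Z$ and chain two elementary inequalities. First, the pointwise hypothesis $|m(\zeta)|\le M(\zeta)$ together with the triangle inequality for integration gives
\[
\Big|\int_{\Ga_k(\mathbb R\times Z)} m(\zeta)\prod_{j=1}^k f_j(\zeta_j)\Big|\le \int_{\Ga_k(\mathbb R\times Z)} M(\zeta)\prod_{j=1}^k |f_j(\zeta_j)|.
\]
Since the integrand on the right is nonnegative (note that $|m|\le M$ implicitly forces $M\ge 0$), its value equals its own absolute value, so the defining inequality of $\|M\|_{[k;\mathbb R\times Z]}$ applies to the nonnegative replacement test functions $|f_j|$, producing the bound $\|M\|_{[k;\mathbb R\times Z]}\prod_j\||f_j|\|_{L^2}$. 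Using the isometry $\||f_j|\|_{L^2}=\|f_j\|_{L^2}$ and taking the supremum over admissible $f_j$ then yields $\|m\|_{[k;\mathbb R\times Z]}\le \|M\|_{[k;\mathbb R\times Z]}$.

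For the second claim, I would absorb the coordinate factors $g_j(\zeta_j)$ into the test functions. Given test $f_j$, set $h_j:=g_jf_j$, which are still $L^2$ functions on $\mathbb R\times Z$; then
\[
\int_{\Ga_k(\mathbb R\times Z)} m(\zeta)\prod_{j=1}^k g_j(\zeta_j)f_j(\zeta_j)=\int_{\Ga_k(\mathbb R\times Z)} m(\zeta)\prod_{j=1}^k h_j(\zeta_j),
\]
and the defining inequality of $\|m\|_{[k;\mathbb R\times Z]}$ applied to the $h_j$, combined with the trivial bound $\|h_j\|_{L^2}\le\|g_j\|_\infty\|f_j\|_{L^2}$, immediately gives the asserted estimate after taking the supremum over $f_j$.

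No step presents a genuine obstacle: this is a definitional bookkeeping lemma, and the argument is identical in the periodic and non-periodic cases because it only uses the product measure on $\Ga_k(\mathbb R\times Z)$ via Fubini and the pointwise identity $\||f_j|\|_{L^2}=\|f_j\|_{L^2}$. The only point deserving mild care is the first step, where one must replace $f_j$ by $|f_j|$ to make the defining inequality of $\|M\|$ applicable with $M$ itself (rather than $|M|$) on the inside.
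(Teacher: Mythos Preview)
Your argument is correct and is precisely the standard definitional verification. The paper itself does not supply a proof of this lemma: it is stated as one of several facts recalled from Tao \cite{tao} (``Here we recall some of the results about $[k; \mathbb R \times Z]$-multiplier from \cite{tao}\dots''), so there is nothing to compare against beyond noting that your proof matches the elementary argument implicit in Tao's framework.
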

\begin{lem}\label{trans}
For $\zeta_0 \in \Ga_k(\mathbb R \times Z)$ and a $[k; \mathbb R
\times Z]$-multiplier $m$, we have
\[\|m(\zeta)\|_{[k; \mathbb R \times Z]} = \|m(\zeta + \zeta_0)\|_{[k; \mathbb R \times Z]}.\]
From this and Minkowski's inequality, we thus have the averaging
estimate, for any finite measure $\mu$ on $\Ga_k(\mathbb R \times
Z)$,
\[\|m*\mu\|_{[k; \mathbb R \times Z]} \leq \|m\|_{[k; \mathbb R \times Z]}\|\mu\|_{L^1(\Ga_k(\mathbb R \times Z))}.\]
\end{lem}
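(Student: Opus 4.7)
The plan is to prove the translation invariance by an elementary change of variable on the test functions, and then deduce the averaging estimate from Minkowski's integral inequality, once we observe that $\|\cdot\|_{[k;\mathbb R\times Z]}$ is a norm on multipliers (it is the operator norm of the associated $k$-linear form), so the triangle inequality in all its forms applies. The main ingredients are (i) $\Gamma_k(\mathbb R\times Z)$ is a subgroup of $(\mathbb R\times Z)^k$, hence translation by any $\zeta_0\in\Gamma_k$ is a bijection of $\Gamma_k$ onto itself, and (ii) the measure on $\Gamma_k$ (given via the parametrization by the first $k-1$ coordinates) is invariant under such translations because the defining hyperplane is a subgroup.

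For the translation invariance, fix $\zeta_0=(\zeta_{0,1},\ldots,\zeta_{0,k})\in\Gamma_k(\mathbb R\times Z)$ and test functions $f_1,\ldots,f_k$ on $\mathbb R\times Z$. In the defining integral for $\|m(\cdot+\zeta_0)\|_{[k;\mathbb R\times Z]}$, substitute $\eta=\zeta+\zeta_0$; since $\sum_j\zeta_{0,j}=0$, the map $\zeta\mapsto\eta$ sends $\Gamma_k$ onto $\Gamma_k$ and preserves the measure, yielding
\[\int_{\Gamma_k(\mathbb R\times Z)} m(\zeta+\zeta_0)\prod_{j=1}^k f_j(\zeta_j)\,d\zeta=\int_{\Gamma_k(\mathbb R\times Z)} m(\eta)\prod_{j=1}^k \tilde f_j(\eta_j)\,d\eta,\]
where $\tilde f_j(\eta_j):=f_j(\eta_j-\zeta_{0,j})$. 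Translation invariance of the Lebesgue/counting measure on $\mathbb R\times Z$ gives $\|\tilde f_j\|_{L^2}=\|f_j\|_{L^2}$, so the right side is bounded by $\|m\|_{[k;\mathbb R\times Z]}\prod_j\|f_j\|_{L^2}$. Taking the supremum over test functions gives $\|m(\cdot+\zeta_0)\|_{[k;\mathbb R\times Z]}\le\|m\|_{[k;\mathbb R\times Z]}$; the reverse inequality follows by replacing $\zeta_0$ with $-\zeta_0\in\Gamma_k$.

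For the averaging estimate, write the convolution on $\Gamma_k$ as
\[(m*\mu)(\zeta)=\int_{\Gamma_k(\mathbb R\times Z)} m(\zeta-\zeta_0)\,d\mu(\zeta_0),\]
which is well defined precisely because $\Gamma_k$ is a subgroup. Since $\|\cdot\|_{[k;\mathbb R\times Z]}$ is a norm, Minkowski's integral inequality yields
\[\|m*\mu\|_{[k;\mathbb R\times Z]}\le\int_{\Gamma_k(\mathbb R\times Z)}\|m(\cdot-\zeta_0)\|_{[k;\mathbb R\times Z]}\,d|\mu|(\zeta_0),\]
and by the translation invariance just established, the integrand is the constant $\|m\|_{[k;\mathbb R\times Z]}$, leaving $\|m\|_{[k;\mathbb R\times Z]}\,\|\mu\|_{L^1(\Gamma_k(\mathbb R\times Z))}$. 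There is no real obstacle here; the only point requiring care is the bookkeeping of the measure on $\Gamma_k$, and it is handled by the fact that the hyperplane is a subgroup under the coordinatewise addition inherited from $(\mathbb R\times Z)^k$.
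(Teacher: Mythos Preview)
Your proof is correct and is the standard argument. The paper itself does not prove this lemma; it merely recalls it (together with Lemmas \ref{compa}, \ref{compo}, and \ref{schur}) from Tao \cite{tao}, so there is no paper proof to compare against beyond the one-line hint ``From this and Minkowski's inequality'' already contained in the statement, which is exactly what you have carried out.
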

\begin{lem}\label{compo}
Let $k_1, k_2 \ge 1$, and $m_1, m_2$ be functions defined on
$(\mathbb R \times Z)^{k_1}$, $(\mathbb R \times Z)^{k_2}$,
respectively. Then
\begin{align*}
\|m_1(\zeta_1, \cdots,\zeta_{k_1})m_2(\zeta_{k_1+1},\cdots,\zeta_{k_1+k_2})\|_{[k_1 + k_2;\mathbb R \times Z]} \leq \|m_1\|_{[k_1+1;\mathbb R \times Z]}\|m_2\|_{[k_2+1;\mathbb R \times Z]}.
\end{align*}
As a special case, we have the $TT^*$ identity, for all functions
$m:(\mathbb R \times Z)^{k} \rightarrow \mathbb{R}$,
\[\|m(\zeta_1, \cdots,\zeta_{k})\overline{m(-\zeta_{k+1},\cdots,-\zeta_{2k})}\|_{[2k;\mathbb R \times Z]}\\
\leq \|m(\zeta_1, \cdots,\zeta_{k})\|_{[k+1;\mathbb R \times Z]}^2.
\]
\end{lem}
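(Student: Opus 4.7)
\medskip

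\noindent\textbf{Proof proposal.} The plan is the standard ``slice by total momentum / Cauchy--Schwarz in the slicing variable'' argument (as in Tao \cite{tao}), which reduces the composition bound to an application of duality on each half. Throughout, write $\zeta = (\zeta_1,\dots,\zeta_{k_1+k_2}) \in \Gamma_{k_1+k_2}(\mathbb R\times Z)$, and let $f_1,\dots,f_{k_1+k_2}$ be arbitrary test functions on $\mathbb R\times Z$. We need to bound the quantity
\[
I \;:=\; \int_{\Gamma_{k_1+k_2}(\mathbb R\times Z)} m_1(\zeta_1,\dots,\zeta_{k_1})\,m_2(\zeta_{k_1+1},\dots,\zeta_{k_1+k_2})\,\prod_{j=1}^{k_1+k_2} f_j(\zeta_j)
\]
by $\|m_1\|_{[k_1+1;\mathbb R\times Z]}\|m_2\|_{[k_2+1;\mathbb R\times Z]}\prod_j \|f_j\|_{L^2}$.

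First I would introduce the slicing variable $\xi \in \mathbb R\times Z$ and use Fubini to split the constraint $\sum_{j=1}^{k_1+k_2}\zeta_j=0$ into the two constraints $\zeta_1+\cdots+\zeta_{k_1}=-\xi$ and $\zeta_{k_1+1}+\cdots+\zeta_{k_1+k_2}=\xi$. This factors $I$ as
\[
I \;=\; \int_{\mathbb R\times Z} F(\xi)\,G(\xi)\, d\xi,
\]
where
\[
F(\xi) \;=\; \int_{\zeta_1+\cdots+\zeta_{k_1}=-\xi} m_1(\zeta_1,\dots,\zeta_{k_1})\prod_{j=1}^{k_1} f_j(\zeta_j),
\]
and $G(\xi)$ is defined analogously from $m_2$ and $f_{k_1+1},\dots,f_{k_1+k_2}$. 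Cauchy--Schwarz in $\xi$ gives $|I| \leq \|F\|_{L^2}\|G\|_{L^2}$, so it suffices to bound each $L^2$ norm.

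Next I would bound $\|F\|_{L^2}$ by duality. Pair $F$ against an arbitrary $h \in L^2(\mathbb R\times Z)$: by definition of $F$ and a relabeling $\zeta_{k_1+1}:=\xi$, the inner product $\langle F,\bar h\rangle$ equals
\[
\int_{\Gamma_{k_1+1}(\mathbb R\times Z)} m_1(\zeta_1,\dots,\zeta_{k_1})\,h(\zeta_{k_1+1})\prod_{j=1}^{k_1} f_j(\zeta_j),
\]
which is majorized by $\|m_1\|_{[k_1+1;\mathbb R\times Z]}\|h\|_{L^2}\prod_{j=1}^{k_1}\|f_j\|_{L^2}$ by the very definition of the $[k_1+1;\mathbb R\times Z]$-multiplier norm. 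Taking the supremum over $\|h\|_{L^2}\leq 1$ yields $\|F\|_{L^2}\leq \|m_1\|_{[k_1+1;\mathbb R\times Z]}\prod_{j=1}^{k_1}\|f_j\|_{L^2}$, and the analogous bound holds for $G$. Combining with Cauchy--Schwarz gives the desired composition inequality.

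For the $TT^*$ special case, I would specialize $k_1=k_2=k$, $m_1=m$, and $m_2(\zeta_{k+1},\dots,\zeta_{2k}):=\overline{m(-\zeta_{k+1},\dots,-\zeta_{2k})}$. Applying the composition bound reduces the task to verifying $\|m_2\|_{[k+1;\mathbb R\times Z]}=\|m\|_{[k+1;\mathbb R\times Z]}$. This follows by two elementary symmetries of the defining integral: complex-conjugating both sides (which replaces $m_2$ by $m(-\cdot)$ and each $f_j$ by $\bar f_j$, preserving $\|f_j\|_{L^2}$) and then substituting $\zeta_j \mapsto -\zeta_j$ throughout (which preserves $\Gamma_{k+1}$ and the $L^2$ norms of the $f_j$). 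The main bookkeeping subtlety, which is the only genuinely fussy point in the argument, is to keep track of which coordinate on $\Gamma_{k_i+1}$ corresponds to the slicing variable $\xi$ so that the duality step in the previous paragraph is applied to a bona fide multiplier on the correct hyperplane; once that identification is fixed the estimates are immediate.
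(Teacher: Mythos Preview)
Your argument is correct and is precisely the standard slicing/Cauchy--Schwarz/duality proof from Tao \cite{tao}. Note, however, that the paper does not give its own proof of this lemma: it is simply quoted as one of the preliminary results recalled from \cite{tao}, so there is no ``paper's proof'' to compare against beyond the original reference, with which your approach coincides.
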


Let $m$ be a $[k; \mathbb R \times Z]$ multipliers. For $1 \le j \le
k$ we define the $j$-$support$ ${\rm supp}_j(m) \subset \mathbb{R}$
of $m$ to be the set
\[{\rm supp}_j(m) := \{\eta_j \in \mathbb{R} : \Ga_k(\mathbb{R} \times Z; \zeta_j = \eta_j) \cap {\rm supp}(m) \neq \emptyset\},\]
where $\Ga_k(\mathbb{R} \times Z; \zeta_j = \eta_j) = \{(\zeta_1,
\cdots, \zeta_k) \in \Ga_k(\mathbb R \times Z: \zeta_j = \eta_j)\}$.
And if $J$ is a non-empty subset of $\{1,\cdots,k\}$, we define the
set $supp_J(m) \subset \mathbb{R}^J$ by
\[{\rm supp}_J(m) := \prod_{j \in J}{\rm supp}_j(m).\]
\begin{lem}\label{schur}
Let $J_1, J_2$ be disjoint non-empty subsets of $\{1, \cdots, k\}$
and $A_1, A_2 > 0$. Suppose that $(m_a)_{a \in I}$ is a collection
of $[k; \mathbb R \times Z]$ multipliers such that
\[\#\{a \in I : \zeta \in {\rm supp}_{J_i}(m_a)\} \le A_i\]
for all $\zeta \in \mathbb{R}^{J_i}$ and $i = 1,2$. Then
\[\big\| \sum_{a \in I} m_a \big\|_{[k;\mathbb R \times Z]} \le (A_1A_2)^{\frac 12} \sup_{a \in I} \|m_a\|_{[k;\mathbb R \times Z]}.\]
In particular, if $m_a$ is non-negative and $A_1, A_2 \sim 1$, then
we have
\[\big\|\sum_{a \in I}m_a\big\|_{[k;\mathbb R \times Z]} \sim \sup_{a \in I}\|m_a\|_{[k;\mathbb R \times Z]}.\]
\end{lem}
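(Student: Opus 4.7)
\smallskip
\noindent\textbf{Proof proposal for Lemma \ref{schur}.}
The plan is to estimate the multilinear form associated with the sum $\sum_a m_a$ by localizing each summand to its support on the two disjoint blocks $J_1,J_2$, applying the uniform bound on the individual multiplier norms, and then using Cauchy--Schwarz in the index $a$ to exploit the two separate counting hypotheses.

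Concretely, I would fix test functions $f_1,\dots,f_k$ on $\mathbb R\times Z$ and estimate, for each $a\in I$,
\[
\Big|\int_{\Gamma_k(\mathbb R\times Z)} m_a(\zeta)\prod_{j=1}^k f_j(\zeta_j)\Big|.
\]
Since $m_a$ vanishes unless $\zeta_j\in\mathrm{supp}_j(m_a)$ for every $j$, I replace each $f_j$ by $f_j^{\,a}:=f_j\cdot \chi_{\mathrm{supp}_j(m_a)}$ for $j\in J_1\cup J_2$, and leave $f_j$ untouched otherwise. Applying the definition of $\|m_a\|_{[k;\mathbb R\times Z]}$ gives
\[
\Big|\int_{\Gamma_k} m_a\prod_j f_j\Big|\le \|m_a\|_{[k;\mathbb R\times Z]}\Big(\prod_{j\in J_1}\|f_j^{\,a}\|_{L^2}\Big)\Big(\prod_{j\in J_2}\|f_j^{\,a}\|_{L^2}\Big)\prod_{j\notin J_1\cup J_2}\|f_j\|_{L^2}.
\]

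Summing over $a$, taking out $\sup_a\|m_a\|_{[k;\mathbb R\times Z]}$, and applying Cauchy--Schwarz in $a$ relative to the disjoint blocks $J_1,J_2$, I obtain
\[
\sum_a \Big(\prod_{j\in J_1}\|f_j^{\,a}\|_{L^2}\Big)\Big(\prod_{j\in J_2}\|f_j^{\,a}\|_{L^2}\Big)\le \Big(\sum_a\prod_{j\in J_1}\|f_j^{\,a}\|_{L^2}^2\Big)^{1/2}\Big(\sum_a\prod_{j\in J_2}\|f_j^{\,a}\|_{L^2}^2\Big)^{1/2}.
\]
For each $i=1,2$, by Fubini (using disjointness of $J_1$ and $J_2$ so the variables do not interact),
\[
\sum_a \prod_{j\in J_i}\|f_j^{\,a}\|_{L^2}^2=\int_{(\mathbb R\times Z)^{J_i}}\Big(\prod_{j\in J_i}|f_j(\zeta_j)|^2\Big)\#\{a: \zeta_{J_i}\in\mathrm{supp}_{J_i}(m_a)\}\,d\zeta_{J_i}\le A_i\prod_{j\in J_i}\|f_j\|_{L^2}^2,
\]
where the last inequality is the hypothesis on the collection $(m_a)$. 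Combining the two estimates yields
\[
\Big|\int_{\Gamma_k}\Big(\sum_a m_a\Big)\prod_j f_j\Big|\le (A_1A_2)^{1/2}\sup_{a\in I}\|m_a\|_{[k;\mathbb R\times Z]}\prod_{j=1}^k\|f_j\|_{L^2},
\]
which gives the claimed bound. The final assertion (when $A_1,A_2\sim1$ and $m_a\ge 0$) follows since Lemma \ref{compa} trivially gives the reverse inequality $\|m_{a_0}\|_{[k;\mathbb R\times Z]}\le \|\sum_a m_a\|_{[k;\mathbb R\times Z]}$ for each $a_0$.

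The step requiring the most care is the bookkeeping in Cauchy--Schwarz: one must factor the sum in $a$ cleanly across $J_1$ and $J_2$, which is precisely why disjointness of $J_1,J_2$ is crucial; without it, one would need to control interactions between shared indices and could not reduce to the product structure $\prod_{j\in J_i}|f_j|^2$ that drives the Fubini step.
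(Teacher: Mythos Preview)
Your argument is correct and is precisely the standard Schur--test proof of this almost-orthogonality lemma. Note, however, that the paper does not supply its own proof of Lemma~\ref{schur}: it is one of several results quoted from Tao~\cite{tao} in Section~2 without argument. Your proof matches the one given there (Lemma~3.11 in~\cite{tao}). One minor remark: your closing comment slightly overstates the role of disjointness of $J_1$ and $J_2$ in the Fubini step --- each block is handled independently and the computation for $\sum_a\prod_{j\in J_i}\|f_j^{\,a}\|_{L^2}^2$ does not see the other block at all; disjointness is simply what makes the Cauchy--Schwarz split into two genuinely separate factors natural.
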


 We set, for $j=1,2,3,$
 \[h_j = \pm |\xi_j|^\al, \,\,\, \zeta_j  = (\tau_j, \xi_j), \,\,\, \lambda_j = \tau_j -
h_j(\xi_j).\]   For the $X_Z^{s,b}$ space estimates, we need to
consider the $[3;\mathbb R \times Z]$-multiplier
$$
m(\zeta_1,\zeta_2, \zeta_3) = \frac{\widetilde
m(\xi_1,\xi_2,\xi_3)}{\prod_{j=1}^3 \langle \lambda_j \rangle^{b_j}}
$$
for a function $\widetilde m$ on $\mathbb R^3$ which will be
specified later. By averaging over unit time scale (Lemmas
\ref{compa} and \ref{trans}), one may restrict the multiplier to the
region $|\lambda_j| \ge 1$. And we define the function $h :
\Ga_3(\mathbb R \times Z) \rightarrow \mathbb{R}$ by setting
\[h(\xi_1,\xi_2,\xi_3) := h_1(\xi_1) + h_2(\xi_2) + h_3(\xi_3) =
-\lambda_1 - \lambda_2 - \lambda_3, \] which plays an important role
in what follows.

\newcommand{\sm}{\mathfrak{m}}
Let $N_j, L_j, H$ $(j = 1,2,3)$ be dyadic numbers. By dyadic
decomposition along the variables $\xi_j, \lambda_j$, as well as the
function $h(\xi_1, \xi_2, \xi_3)$, we have
\begin{align}\label{dysum}
\|m\|_{[3;\mathbb R \times Z]} \lesssim \Big\|\sum_{N_{max} \gtrsim 1} \sum_H \sum_{L_1,L_2,L_3 \gtrsim 1} \frac {\sm(N_1,N_2,N_3)}{L_1^{b_1}L_2^{b_2}L_3^{b_3}}X_{N_1,N_2,N_3;H;L_1,L_2,L_3}\Big\|_{[3;\mathbb R \times Z]},
\end{align}
where $X_{N_1,N_2,N_3;H;L_1,L_2,L_3}$ is the multiplier given by
\[X_{N_1,N_2,N_3;H;L_1,L_2,L_3}(\tau,\xi_1, \xi_2, \xi_3) := \chi_{\{|h(\xi_1, \xi_2, \xi_3)| \sim H\}} \prod_{j=1}^3 \chi_{\{|\xi_j|\sim N_j\}}\chi_{\{|\lambda_j| \sim L_j\}}\]
and \[\sm(N_1,N_2,N_3) := \sup_{|\xi_j| \sim N_j, \forall j = 1,2,3}|\widetilde m(\xi_1,\xi_2,\xi_3)|.\]
From the identities $\xi_1 + \xi_2 + \xi_3 = 0$ and $\lambda_1 + \lambda_2 + \lambda_3 + h(\xi_1, \xi_2, \xi_3) = 0$ on the support of the multiplier, we see that $X_{N_1,N_2,N_3;H;L_1,L_2,L_3}$ vanishes unless
\[N_{max} \sim N_{med}\;\;\mbox{and}\;\; L_{max} \sim \max(H, L_{med}).\]

Suppose for the moment that $N_1 \ge N_2 \ge N_3$. Then we have $N_1
\sim N_2 \gtrsim 1$. As $N_1$ ranges over the dyadic numbers, the
symbols in the summation in \eqref{dysum} are supported on
essentially disjoint regions of $\xi_1$ and $\xi_2$ spaces. This is
true for any permutation of $\{1,2,3\}$. Thus, by Lemma \ref{schur}
we have
\begin{align*}
\|m\|_{[3;\mathbb R \times Z]} \lesssim 
\sup_{N \gtrsim 1}\Big\| &\sum_{N_{max} \sim N_{med} \sim N} \sum_H \sum_{L_{max} \sim max(H,L_{med})}\\
&\frac {\sm(N_1,N_2,N_3)}{L_1^{b_1}L_2^{b_2}L_3^{b_3}}X_{N_1,N_2,N_3;H;L_1,L_2,L_3}\Big\|_{[3;\mathbb R \times Z]}.
\end{align*}
Hence, one is led to consider
\begin{align}\label{dyest}
\|X_{N_1,N_2,N_3;H;L_1,L_2,L_3}\|_{[3;\mathbb R \times Z]}
\end{align}
in the low modulation case $H \sim L_{max}$ and the high modulation
case $L_{max} \sim L_{med} \gg H.$ The following two lemmas give
estimates for \eqref{dyest} in each case.

\begin{lem}[(37) in \cite{tao}]\label{hmod}  If $L_{max} \sim L_{med} \gg H$, then
\begin{align*}
\eqref{dyest}
\lesssim L_{min}^\frac 12 \Big\|\mathcal{\chi}_{h(\xi) \sim H} \prod^3_{j=1} \mathcal{\chi}_{|\xi_j| \sim N_j}\Big\|_{[3;\mathbb{R}^{1+d}]} \lesssim L_{min}^\frac 12 |\{ \xi_2 \in Z : |\xi_2| \sim N_{min}\}|^\frac 12.
\end{align*}
\end{lem}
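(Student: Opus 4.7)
The plan is to follow the scheme developed by Tao \cite{tao} for multilinear weighted $L^2$ estimates. The core observation behind Lemma \ref{hmod} is that in the high-modulation regime $L_{max}\sim L_{med}\gg H$, the identity $\lambda_1+\lambda_2+\lambda_3=-h$ together with $|h|\sim H$ forces the two largest $\lambda_j$'s to nearly cancel, so the $\lambda_j$ with $|\lambda_j|\sim L_{min}$ is the only essentially free time variable. This frees up an interval of length $\sim L_{min}$ in $\tau$, from which a single Cauchy-Schwarz will extract the advertised $L_{min}^{1/2}$ factor, while the $\xi$-variables remain untouched.

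To establish the first inequality, by definition it suffices to control the trilinear form
\[
I=\int_{\Gamma_3(\mathbb R\times Z)} X_{N_1,N_2,N_3;H;L_1,L_2,L_3}(\zeta)\prod_{j=1}^3 f_j(\zeta_j)
\]
against $\prod_j\|f_j\|_{L^2}$. By the invariance of $\|\cdot\|_{[3;\mathbb R\times Z]}$ under permutations of $(\zeta_1,\zeta_2,\zeta_3)$ (Lemma \ref{compa}), we may assume $L_1=L_{min}$. Parameterize $\Gamma_3$ by $(\tau_1,\tau_2,\xi_1,\xi_2)$ with $\tau_3=-\tau_1-\tau_2$ and $\xi_3=-\xi_1-\xi_2$. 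Apply Cauchy-Schwarz to the $\tau_1$-integral: since $\chi_{|\lambda_1|\sim L_1}$ confines $\tau_1$ to a set of measure $\lesssim L_1=L_{min}$, one obtains the factor $L_{min}^{1/2}$ and replaces $f_1(\tau_1,\xi_1)$ by $\|f_1(\cdot,\xi_1)\|_{L^2_{\tau_1}}$. A change of variables in $\tau_2$ and Cauchy-Schwarz in that variable then separates the remaining time integrals into $\|f_2\|_{L^2}\|f_3\|_{L^2}$, and what remains is a spatial trilinear form in $(\xi_1,\xi_2,\xi_3)$ whose multiplier is precisely $\chi_{|h|\sim H}\prod_j\chi_{|\xi_j|\sim N_j}$.

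For the second inequality, the spatial $[3;\mathbb R^{1+d}]$-norm of $\chi_E$ with $E=\{|h(\xi)|\sim H,\ |\xi_j|\sim N_j\}$ is handled by a standard slice bound: applying Cauchy-Schwarz in two of the three frequency variables (using $\xi_1+\xi_2+\xi_3=0$) gives
\[
\|\chi_E\|_{[3;\mathbb R^{1+d}]}\lesssim \sup_{\xi_*}|E\cap\{\xi_2=\xi_*\}|^{1/2}
\]
after relabeling so that $|\xi_2|\sim N_{min}$. The slice is trivially contained in $\{\xi_2\in Z:|\xi_2|\sim N_{min}\}$, whose Lebesgue measure (or counting measure in the periodic setting) yields the claimed bound.

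The main technical obstacle is the careful bookkeeping of which of $L_1, L_2, L_3$ plays the role of $L_{min}$, so that the Cauchy-Schwarz step is applied on the correct $\tau$-axis; by the permutation symmetry this reduces to the single case $L_1=L_{min}$ treated above. A subtler point is that the $\tau_1$-Cauchy-Schwarz must be performed \emph{before} any $\tau_3$-integration, because the constraint $|\lambda_3|\sim L_3\sim L_{max}$ would otherwise artificially shrink the admissible range of $\tau_1$. The high-modulation hypothesis $L_{med}\gg H$ is precisely what ensures this cancellation is soft enough for $\tau_1$ to range over its natural interval of length $L_{min}$, producing the $L_{min}^{1/2}$ factor without collateral loss.
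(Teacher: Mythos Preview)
The paper does not supply a proof of this lemma; it is quoted verbatim from Tao \cite{tao} (equation (37) there). So there is no paper-side argument to compare with, and I evaluate your proposal on its own merits.

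Your overall architecture is correct: Fubini the $\Gamma_3(\mathbb R\times Z)$ integral into a spatial $\Gamma_3(Z)$ integral and, for each fixed $\xi$, a temporal trilinear integral over $\lambda_1+\lambda_2+\lambda_3=-h(\xi)$; extract $L_{min}^{1/2}$ from the latter; then bound the spatial multiplier. But the execution has two problems. First, for the temporal step, the sequence you describe (Cauchy--Schwarz in $\tau_1$ to pull out both $L_{min}^{1/2}$ and $\|f_1(\cdot,\xi_1)\|_{L^2_{\tau_1}}$, then a further Cauchy--Schwarz in $\tau_2$) does not close as written: after the first step you are left with an $L^2_{\tau_1}$ norm of a convolution of $f_2$ and $f_3$, which Young's inequality controls by $\|f_2\|_1\|f_3\|_2$ or $\|f_2\|_2\|f_3\|_1$, neither yielding the desired $L^2\times L^2$ bound. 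The clean route is to write the $\lambda$-integral as $\langle g_3,\,g_1\!*\!g_2\rangle$, use Cauchy--Schwarz, then Young's inequality $\|g_1\!*\!g_2\|_2\le\|g_1\|_1\|g_2\|_2$, and finally $\|g_1\|_1\le L_1^{1/2}\|g_1\|_2$. Note also that your final paragraph misattributes the role of the hypothesis $L_{med}\gg H$: the decoupling bound $\|X\|\lesssim L_{min}^{1/2}\|\chi_{E'}\|_{[3;Z]}$ holds in \emph{every} regime; the hypothesis merely names the case where no sharper bound (Lemma \ref{lmod}) is available.

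Second, your argument for the spatial bound is genuinely wrong. You relabel so that $|\xi_2|\sim N_{min}$ and invoke the slice bound $\sup_{\xi_*}|E\cap\{\xi_2=\xi_*\}|^{1/2}$, but the slice at $\xi_2=\xi_*$ is a set of $\xi_1$'s (with $\xi_3=-\xi_1-\xi_*$) of size $\sim N_{max}$, not $N_{min}$; your next sentence, that ``the slice is trivially contained in $\{\xi_2\in Z:|\xi_2|\sim N_{min}\}$,'' is incoherent since the slice is not a set of $\xi_2$'s at all. The correct move is either to freeze a variable carrying $N_{max}$ (so the free variable lies in an interval of length $\lesssim N_{min}$), or equivalently to bound $\int_{\Gamma_3(Z)}\chi_E\,F_1F_2F_3$ by first applying Cauchy--Schwarz in $\xi_1$ for fixed $\xi_2$ (giving $\|F_1\|_2\|F_3\|_2$), and then Cauchy--Schwarz on $\int_{|\xi_2|\sim N_{min}}|F_2|\le |\{|\xi_2|\sim N_{min}\}|^{1/2}\|F_2\|_2$.
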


Let $|E|$ denote the Lebesgue measure or counting measure of any
measurable subset $E$ of $Z$.

\begin{lem}[Corollary 4.2 in \cite{tao}]\label{lmod}
Let $N_1, N_2, N_3 > 0, L_1 \ge L_2 \ge L_3$. Suppose that $H \sim
L_{max}$ and $\xi_1^0, \xi_2^0, \xi_3^0$ satisfy that
\[|\xi_j^0| \sim N_j \text { for } j = 1,2,3\;\;\text{and}\;\; |\xi_1^0 + \xi_2^0 + \xi_3^0| \ll N_{min}.\] Then we have
\[\eqref{dyest} \lesssim L_3^\frac 12 \big|\{\xi_2 \in Z : |\xi_2 - \xi_2^0| \ll N_{min}; h_2(\xi_2) + h_3(\xi - \xi_2) = \tau + \mathcal{O}(L_2)\}\big|^\frac 12 \]
for some $\tau \in \mathbb{R}$ and $\xi \in Z$ with $|\xi + \xi_1^0|
\ll N_{min}$. The same statement hold with the roles of the indices
1,2,3 permuted.
\end{lem}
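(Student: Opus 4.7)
My plan is to derive this estimate by a Cauchy--Schwarz reduction, chosen so that the factor $L_3 = L_{min}$ emerges outside. By duality,
\[
\|X_{N_1,N_2,N_3;H;L_1,L_2,L_3}\|_{[3;\mathbb R\times Z]} = \sup_{\|f_j\|_{L^2}=1} \Big|\int f_1(\zeta_1) f_2(\zeta_2) f_3(-\zeta_1-\zeta_2)\, X\, d\zeta_1\, d\zeta_2\Big|.
\]
I would apply Cauchy--Schwarz first in $\zeta_1$ (separating $f_1$), then in $\zeta_2$ on the inner integral, and use Fubini to absorb $f_3$, obtaining
\[
\|X\|_{[3;\mathbb R\times Z]}^2 \lesssim \sup_{\zeta_1 \in \mathrm{supp}(X)} \int X(\zeta_1,\zeta_2,-\zeta_1-\zeta_2)\, d\zeta_2.
\]
Pivoting on $f_1$ (rather than on $f_3$) is the crucial choice: it puts the minimum modulation $L_3$ inside the integral above, where it can be extracted at the next step.

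Next, I fix $\zeta_1 = (\tau_1,\xi_1)$ in the support, so $|\xi_1|\sim N_1$ and $|\tau_1-h_1(\xi_1)|\sim L_1$, and integrate $\tau_2$ first. Writing $\xi_3=-\xi_1-\xi_2$ and $\tau_3=-\tau_1-\tau_2$, the cutoffs $|\tau_2-h_2(\xi_2)|\sim L_2$ and $|\tau_3-h_3(\xi_3)|\sim L_3$ confine $\tau_2$ to two intervals of lengths $\sim L_2$ and $\sim L_3$, respectively. Their intersection has measure $\lesssim \min(L_2,L_3) = L_3$, and is nonempty only when
\[
h_2(\xi_2) + h_3(-\xi_1-\xi_2) + \tau_1 = O(L_2).
\]
Setting $\tau := -\tau_1$ and $\xi := -\xi_1$, this is exactly the compatibility relation $h_2(\xi_2) + h_3(\xi-\xi_2) = \tau + O(L_2)$ appearing in the statement, and the remaining $\xi_2$-integral counts the measure of the claimed set.

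The refinement from $|\xi_2|\sim N_2$ to $|\xi_2-\xi_2^0|\ll N_{min}$ (and similarly $|\xi+\xi_1^0|\ll N_{min}$) reflects the fact that the estimate is applied to the further $N_{min}$-box-localized version of the multiplier implicit in the hypothesis $|\xi_j^0|\sim N_j$, $|\xi_1^0+\xi_2^0+\xi_3^0|\ll N_{min}$; this subdivision can be performed before the Cauchy--Schwarz step and contributes no additional factors. The assertions obtained by permuting the indices $1,2,3$ follow by instead pivoting on $f_2$ or $f_3$ in the initial Cauchy--Schwarz. The only real obstacle is the careful bookkeeping of the $\tau$-integration and of which constraint is enforced by which modulation; in particular, one must use the low-modulation hypothesis $H\sim L_{max}$ to check that the $|h|\sim H$ cutoff is automatically compatible with the constraints left after the $\tau_2$-integration, so that nothing is lost in passing to the final $\xi_2$-count.
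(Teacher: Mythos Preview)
The paper does not supply its own proof of this lemma; it is quoted directly as Corollary~4.2 of Tao~\cite{tao}. Your sketch is essentially Tao's argument: the Cauchy--Schwarz reduction (pairing $f_2\otimes f_3$ against $Xf_1$, then changing variables to pull out $\|f_1\|_2$) gives $\|X\|_{[3]}^2 \le \sup_{\zeta_1}\int X\,d\zeta_2$, the $\tau_2$-integration produces the factor $L_3=L_{min}$ together with the compatibility constraint $h_2(\xi_2)+h_3(\xi-\xi_2)=\tau+O(L_2)$, and the prior $N_{min}$-box localization (justified by Schur's test, Lemma~\ref{schur}) accounts for the $|\xi_2-\xi_2^0|\ll N_{min}$ refinement.

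One small remark: the hypothesis $H\sim L_{max}$ is not actually needed to make the inequality true, since the cutoff $|h|\sim H$ can simply be dropped for an upper bound; rather, that hypothesis delimits the regime in which this estimate is sharp and useful (as opposed to the high-modulation regime handled by Lemma~\ref{hmod}). Otherwise your outline is correct and matches the cited source.
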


\section{Bilinear Estimates}
In order to prove well-posedness for  \eqref{eq}, we show the
trilinear estimates (Proposition \ref{trilinear} below). For this
purpose, we first prove a bilinear estimate for $\|u\overline
v\|_{L^2(\mathbb R \times \widehat Z)}$, which automatically gives the estimate for $\|u v\|_{L^2(\mathbb R \times \widehat Z)}$. 
Since  the resonance function is $h(\xi_1, \xi_2, \xi_3) =
|\xi_1|^\al - |\xi_2|^\al + |\xi_3|^\al,$  we have\\
$|\xi_{max}|^{\alpha-1}|\xi_{min}| \lesssim |h(\xi)| \lesssim
|\xi_{max}|^\al.$

To begin with, we establish estimate for \eqref{dyest}. Here
$\langle \cdot \rangle_{Z}$ denotes $|\cdot|$ for non-periodic case
and $1 + |\cdot|$ for periodic case. So, $|\{\xi \in Z : a \le \xi
\le b\}| = O(\langle b-a \rangle_Z)$.
\begin{prop}\label{dyest2}
Let $H, N_1, N_2, N_3, L_1, L_2, L_3$ be dyadic and $h(\xi) =
|\xi_1|^\al - |\xi_2|^\al + |\xi_3|^\al$.  Then we have the
following.
\begin{itemize}
\item If $H \sim L_{max} \sim L_1$ and $N_{1} \sim N_{max}$,  
    \begin{itemize}
    \item[] $\eqref{dyest} \lesssim L_{min}^\frac 12 \langle\min(N_{min}^\frac 12, N_{max}^{\frac {1 - \al}2} L_{med}^\frac 12)\rangle_Z$.\\
    \end{itemize}
\item If $H \sim L_{max} \sim L_1$ and $N_{2} \sim N_{3} \gg N_{1}$,
    \begin{itemize}
    \item[] $\eqref{dyest} \lesssim L_{min}^\frac 12 \langle \min(N_{min}^\frac 12, N_{max}^{\frac {2 - \al}2} N_{min}^{-\frac 12} L_{med}^\frac 12)\rangle_Z$.\\
    \end{itemize}
\item If $H \sim L_{max} \sim L_2$ and $N_{max} \sim N_{min}$,  
    \begin{itemize}
    \item[] $\eqref{dyest} \lesssim L_{min}^\frac 12 \langle\min(N_{min}^\frac 12, N_{max}^{\frac {2 - \al}4} L_{med}^\frac 14)\rangle_Z$.\\
    \end{itemize}
\item If $H \sim L_{max} \sim L_2$ and $N_{max} \sim N_{med} \gg N_{min}$, 
    \begin{itemize}
    \item[] $\eqref{dyest} \lesssim L_{min}^\frac 12 \langle \min(N_{min}^\frac 12, N_{max}^{\frac {1 - \al}2} L_{med}^\frac 12)\rangle_Z$.\\
    \end{itemize}
\item If $H \ll L_{max} \sim L_{med}$, 
    \begin{itemize}
    \item[] $\eqref{dyest} \lesssim L_{min}^\frac 12 \langle N_{min}^\frac 12 \rangle_Z$.
    \end{itemize}
\end{itemize}
By symmetry, the same estimates also hold for the case $H \sim
L_{max} \sim L_3$.
\end{prop}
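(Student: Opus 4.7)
The plan is to reduce \eqref{dyest} in each of the five cases to a measure-theoretic estimate on a resonant level set via Lemma \ref{hmod} or Lemma \ref{lmod}, and then close that estimate by a derivative analysis of the resonance phase. Case (v), the high-modulation regime $H\ll L_{max}\sim L_{med}$, is immediate from Lemma \ref{hmod}: the measure of $\{|\xi_2|\sim N_{min}\}$ is $\lesssim \langle N_{min}\rangle_Z$, and the outer square root gives the claimed bound. For cases (i)--(iv), I would apply Lemma \ref{lmod} with the permutation $(a,b,c)$ of $(1,2,3)$ that assigns $L_{max}$ to the ``eliminated'' index $a$. This reduces \eqref{dyest} to $L_{min}^{1/2}|E|^{1/2}$ where
\[
E := \bigl\{\eta\in Z : |\eta-\eta_0|\ll N_{min},\; F(\eta)=\tau+O(L_{med})\bigr\},\qquad F(\eta):=h_b(\eta)+h_c(\xi-\eta),
\]
and $\xi\approx -\xi_a^0$. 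The first entry $\langle N_{min}^{1/2}\rangle_Z$ of each minimum is simply the trivial interval-length bound $|E|\lesssim \langle N_{min}\rangle_Z$; the second entry comes from the differential analysis of $F$.

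The core analytic work is a pointwise lower bound, in each case, either on $|F'|$ or on $|F''|$, valid uniformly on $E$. In cases (i) and (iv), the configuration of the $N_j$ together with the constraint $\xi_1^0+\xi_2^0+\xi_3^0\approx 0$ forces $|F'|\sim N_{max}^{\al-1}$ throughout $E$: either one of $|\eta|,|\xi-\eta|$ strictly dominates the other so that its $|\cdot|^{\al-1}$-term dominates $F'$, or, in the sign-opposite regime, the mean-value theorem together with $\bigl||\xi_b^0|-|\xi_c^0|\bigr|\sim N_{max}$ yields the same lower bound; the standard estimate $|E|\lesssim L_{med}/|F'|$ then produces the factor $N_{max}^{(1-\al)/2}L_{med}^{1/2}$. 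In case (ii), both $|\eta|$ and $|\xi-\eta|$ are of size $N_{max}$ but with opposite signs (since $\xi_2^0+\xi_3^0\approx -\xi_1^0$ is small), and the key observation is that in this sign regime $|\eta|-|\xi-\eta|$ equals the $\eta$-independent quantity $\xi$ of magnitude $\sim|\xi_1^0|\sim N_{min}$, so the mean-value theorem gives $|F'|\sim N_{max}^{\al-2}N_{min}$ uniformly, and hence $|E|\lesssim L_{med}N_{max}^{2-\al}/N_{min}$. In case (iii), where all $N_j$ are comparable and $F'$ may vanish, I would eliminate precisely the index $2$ carrying $L_{max}$; the two surviving $h$-terms then have matching $+|\cdot|^\al$ signs, so
\[
F''(\eta)=\al(\al-1)\bigl(|\xi-\eta|^{\al-2}+|\eta|^{\al-2}\bigr)
\]
is uniformly positive and of size $N_{max}^{\al-2}$, and the classical second-derivative level-set bound gives $|E|\lesssim (L_{med}/N_{max}^{\al-2})^{1/2}$, producing the fourth-root factor $N_{max}^{(2-\al)/4}L_{med}^{1/4}$.

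The main obstacle is case (iii): $F'$ provides no uniform lower bound, and one must arrange for the two surviving terms in $F$ to \emph{add} so that $F''$ has a definite sign. This is the place where the minus sign in the middle of the resonance $h(\xi_1,\xi_2,\xi_3)=|\xi_1|^\al-|\xi_2|^\al+|\xi_3|^\al$ is essential, because eliminating the index carrying $L_{max}$ in this case is exactly what removes the ``$-$'' term from $F$. Once the correct permutation is fixed the $F''$ bound is immediate; the remainder of the argument is bookkeeping, taking the minimum of the trivial $\langle N_{min}\rangle_Z$ bound with the derivative-based bound and substituting into the $L_{min}^{1/2}|\cdot|^{1/2}$ envelope from Lemma \ref{lmod} to record each of the five stated inequalities.
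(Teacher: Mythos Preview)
Your proposal is correct and follows essentially the same route as the paper: Lemma~\ref{hmod} handles case (v), and for (i)--(iv) you apply Lemma~\ref{lmod} after eliminating the index carrying $L_{max}$, then close with a first- or second-derivative lower bound on the phase; your direct computation of $F''$ in case (iii) and the mean-value argument in case (ii) are only cosmetic variants of the paper's Taylor-expansion and integral formulations.

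One small imprecision: you treat cases (i) and (iv) with a uniform dichotomy, but the sign structure of $F'$ differs between them. Eliminating index $1$ gives $F'\propto p(\eta)+p(\xi-\eta)$, while eliminating index $2$ gives $F'\propto p(\eta)-p(\xi-\eta)$, where $p(x)=|x|^{\al-2}x$. In case (iv) with $N_2=N_{min}$ one has $|\eta|\sim|\xi-\eta|\sim N_{max}$ with opposite signs, and there your claimed relation $\bigl||\xi_b^0|-|\xi_c^0|\bigr|\sim N_{max}$ fails (it is $|\xi_2^0|\sim N_{min}$); however, precisely because the surviving $h$-terms are both $+|\cdot|^{\al}$, the two $p$-terms in $F'$ now \emph{add} rather than subtract, and $|F'|\sim N_{max}^{\al-1}$ follows directly without the MVT. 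This is a bookkeeping slip in the case analysis, not a gap in the strategy.
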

\begin{proof}
Lemma \ref{hmod} gives the high modulation case $H \ll L_{max} \sim
L_{med}$. So we need only to show the estimates in the first four
cases.

First we consider the  case $L_1 \sim L_{max}$ (the case $L_3 \sim
L_{max}$ follows by symmetry). Then by Lemma \ref{lmod}, we have
\begin{align}\label{length2}
\eqref{dyest} \lesssim L_3^\frac 12 \big|\{\xi_2 \in Z : |\xi_2 - \xi_2^0| \ll N_{min}; |\xi_2|^{\al} - |\xi - \xi_2|^{\al} = \tau + \mathcal{O}(L_2)\}\big|^\frac 12
\end{align}
for some $\tau \in \mathbb{R}$ and $\xi \in Z$ with $|\xi + \xi_1^0|
\ll N_{min}$. We observe that the derivative of $|\xi_2|^{\al} -
|\xi - \xi_2|^{\al}$ is equal to $\al(|\xi_2|^{\al-2}\xi_2 - |\xi_2
- \xi|^{\al -2}(\xi_2 - \xi))$.

If $N_1 \sim N_{\max}$, then $0 < |\xi_2| < C|\xi|$ for some
constant $C > 1$. This means $\xi_2$ is equal to $c\xi$ for some $0
< |c| < C$ and thus  $\big||\xi_2|^{\al-2}\xi_2 - |\xi_2 - \xi|^{\al
-2}(\xi_2 - \xi)\big| = \big|(|c|^{\al - 2}c - |c-1|^{\al -
2}(c-1))|\xi|^{\al -2}\xi\big|$, which is greater than or equal to
$((C+1)^{\al-1} - C^{\al-1})|\xi|^{\al - 1}$. So, $\xi_2$ is contained
in an interval of length $\mathcal{O}({N_{max}^{1-\al}L_{med}})$.
Hence, by \eqref{length2} we get the desired estimate for the first
case.

If $N_2 \sim N_3 \gg N_1$, then
\begin{align*}
&\qquad (\alpha-1)^{-1}\big||\xi_2|^{\al - 2}\xi_2 - |\xi_2 - \xi|^{\al - 2}(\xi_2 - \xi)\big| \\
&= \int_{\xi_2 - \xi}^{\xi_2} |\widetilde{\xi}|^{\al - 2} d\widetilde{\xi} \ge \min(|\xi_2|^{\al - 2}|\xi|, |\xi_2 - \xi|^{\al - 2}|\xi|).
\end{align*}
So, $\xi_2$ variable is contained in interval of length
$\mathcal{O}({N_{max}^{2-\al}N_{min}^{-1}L_{med}})$. This and
\eqref{length2} give the estimate for the second case.

We now consider the case $L_2 \sim L_{max}$. If $N_1 \sim N_2 \sim
N_3$, we see that \[\frac {|\xi_2|^{\al-2}\xi_2 + |\xi_2 - \xi|^{\al
-2}(\xi_2 - \xi)}{|\frac \xi 2|^{\al -2}(\xi_2 - \frac \xi2)}
\gtrsim 1\] by the Taylor expansion. This means that $\xi_2$ is
contained in an interval of length $\mathcal{O}(N_{max}^{\frac {2 -
\al}2}L_{med}^\frac 12)$ by the mean value theorem and the estimate
for the third case follows from \eqref{length2}.

If $N_{max} \sim N_{med} \gg N_{min}$, then we have
$||\xi_2|^{\al-2}\xi_2 +|\xi_2 - \xi|^{\al -2}(\xi_2 - \xi)| \sim
|\xi_2 - \frac \xi2|^{\al - 1} \sim N_{max}^{\al - 1}$ and thus
$\eqref{length2}$ and the mean value theorem shows that $\xi_2$ is
contained in an interval of length
$\mathcal{O}(N_{max}^{1-\al}L_{med})$. Since $\xi_2$ is also
contained in an interval of length $\ll N_{min}$, Proposition
\ref{dyest2} follows from \eqref{length2}.
\end{proof}
We now show some  bilinear estimates for the periodic and non
periodic cases.
\begin{prop}\label{bilinear2} Let $s
\ge \frac{2-\al}4$ and  $0 < \ep  \ll 1$. Then,  for  $u \in
X_{\widehat Z}^{0, \frac 12 - \ep}$ and $v \in X_{\widehat Z}^{s,
\frac 12 + \ep}$, we have
\[\|uv\|_{L^2(\mathbb{R} \times \widehat Z)} = \|u\overline{v}\|_{L^2(\mathbb{R} \times \widehat Z)} \lesssim \|u\|_{X_{\widehat Z}^{0, \frac 12 - \ep}}\|v\|_{X_{\widehat Z}^{s, \frac 12 + \ep}}.\]
\end{prop}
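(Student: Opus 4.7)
The plan is to recast the bilinear estimate as a $[3;\mathbb{R}\times Z]$-multiplier bound and then exploit the dyadic machinery of Section 2 together with Proposition \ref{dyest2}.

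First, since $|uv|=|u\bar v|$ pointwise, it suffices to treat $\|u\bar v\|_{L^2}$. By Plancherel and $L^2$-duality against a test function $w$, together with the identity $\widehat{\bar v}(\zeta)=\overline{\hat v(-\zeta)}$, the estimate is equivalent to the multiplier bound
\[
\Bigl\|\frac{\langle\xi_2\rangle^{-s}}{\langle\lambda_1\rangle^{\frac12-\ep}\langle\lambda_2\rangle^{\frac12+\ep}}\Bigr\|_{[3;\mathbb{R}\times Z]}\lesssim 1,
\]
where $\lambda_1=\tau_1-|\xi_1|^\al$, $\lambda_2=\tau_2+|\xi_2|^\al$ (the sign on $\xi_2$ flipped by the conjugation of $v$), and $\lambda_3$ may be chosen as $\tau_3-|\xi_3|^\al$ (no modulation weight is carried by $w$). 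The associated resonance function is then $h=|\xi_1|^\al-|\xi_2|^\al+|\xi_3|^\al$, which is precisely the one used in Proposition \ref{dyest2}.

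Next, I apply the dyadic decomposition of Section 2 and pigeon-hole in the overall frequency scale $N=N_{max}\sim N_{med}$ via Lemma \ref{schur}, reducing the estimate to a uniform-in-$N$ bound on
\[
\sum\frac{N_2^{-s}}{L_1^{\frac12-\ep}L_2^{\frac12+\ep}}\,\|X_{N_1,N_2,N_3;H;L_1,L_2,L_3}\|_{[3;\mathbb{R}\times Z]},
\]
subject to $L_{max}\sim\max(H,L_{med})$. Proposition \ref{dyest2} and Lemma \ref{hmod} then supply bounds on $\|X\|_{[3]}$ in each configuration. After inserting these bounds, the sums in $L_1,L_2,L_3,H$ are geometric series that converge thanks to the $\ep$-room in the modulation exponents, leaving a pure $N$-factor to be absorbed by the weight $N_2^{-s}$.

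The decisive configuration is the third bullet of Proposition \ref{dyest2}: $H\sim L_{max}\sim L_2$ with $N_1\sim N_2\sim N_3\sim N$. There $\|X\|\lesssim L_{min}^{1/2}\,N^{(2-\al)/4}L_{med}^{1/4}$, and after summing the $L$-parameters one is left with the factor $N^{(2-\al)/4-s}$, uniformly bounded in $N$ exactly when $s\ge(2-\al)/4$; this pins down the threshold. All other low-modulation configurations give strictly stronger $N$-decay, since Proposition \ref{dyest2} supplies factors of the form $N_{max}^{(1-\al)/2}$ or $N_{max}^{(2-\al)/2}N_{min}^{-1/2}$ in those cases. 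The main obstacle I anticipate is the high-modulation regime $H\ll L_{max}\sim L_{med}$: there one cannot afford the crude bound $L_{min}^{1/2}\langle N_{min}^{1/2}\rangle_Z$ from Lemma \ref{hmod} directly and must instead exploit the sharper intermediate estimate $L_{min}^{1/2}\|\chi_{h\sim H}\prod\chi_{|\xi_j|\sim N_j}\|_{[3;\mathbb{R}^{1+d}]}$, using the near-one-dimensionality of the resonance set $\{h\sim H\}$ under $\xi_1+\xi_2+\xi_3=0$ to recover sufficient $N$-decay. Verifying this for every permutation of $(N_1,N_2,N_3)$ and checking that the $\ep$-loss in $\langle\lambda_1\rangle^{1/2-\ep}$ is always balanced by an $\ep$-gain from $\langle\lambda_2\rangle^{1/2+\ep}$ or from the $H$-summation will be the most delicate bookkeeping in the argument.
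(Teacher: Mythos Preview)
Your overall framework---reducing to a $[3;\mathbb{R}\times Z]$-multiplier bound, dyadically decomposing, and invoking Proposition~\ref{dyest2}---is exactly the paper's approach. However, your assessment of where the difficulty lies is inverted, and this leads to a genuine gap.

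The high-modulation regime $H\ll L_{max}\sim L_{med}$ is \emph{not} the obstacle. The point you are missing is the lower bound on the resonance: on the support of $X_{N_1,N_2,N_3;H;L_1,L_2,L_3}$ one has $|h(\xi)|\gtrsim N_{max}^{\al-1}N_{min}$, so the $H$-sum only contributes when $H\gtrsim N^{\al-1}N_{min}$, which in the high-modulation case forces $L_{max}\sim L_{med}\gg H\gtrsim N^{\al-1}N_{min}$. This large-modulation gain, fed through $L_1^{1/2-\ep}L_2^{1/2+\ep}\gtrsim L_{min}^{1/2+\ep}L_{med}^{1/2-\ep}$, makes the crude bound $L_{min}^{1/2}\langle N_{min}^{1/2}\rangle_Z$ from Lemma~\ref{hmod} entirely sufficient; no appeal to the intermediate $\chi_{h\sim H}$-estimate is needed.

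Conversely, your claim that ``all other low-modulation configurations give strictly stronger $N$-decay'' is false, and this is where the real work is. Besides the $L_2\sim L_{max}$, $N_1\sim N_2\sim N_3$ case you flag, the case $L_{max}\sim L_1$ (or $L_3$) with the corresponding high--high--low configuration (bullet~2 of Proposition~\ref{dyest2}) also saturates the threshold $N^{(2-\al)/4-s}$ and requires balancing the two branches of the $\min(\cdot,\cdot)$ against the $L_{med}$-summation. Moreover, when $N_{min}$ coincides with the index carrying the $\langle\xi\rangle^{-s}$ weight (in your labeling, $N_2=N_{min}$), you cannot simply ``absorb a pure $N$-factor by $N_2^{-s}$''; the weight is only $\langle N_{min}\rangle^{-s}$ there, and the summation needs separate treatment. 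The paper carries out this multi-case bookkeeping explicitly (with further distinctions between $Z=\mathbb{R}$ and $Z=\mathbb{Z}$), and your sketch does not yet contain the ingredients to close those cases.
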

\noindent For the periodic case the following  is  to be useful.
\begin{lem}\label{aux}
\[\|u\overline{v}\|_{L^2(\mathbb{R} \times \mathbb T)} \lesssim \|(u - \widehat u(0))(\overline{v}-\widehat{\overline{v}}(0))\|_{L^2(\mathbb{R} \times \mathbb T)} + \|u\|_{X_{\mathbb T}^{0, \frac 12 - \ep}}\|\overline{v}\|_{X_{\mathbb T}^{0, \frac 12 + \ep}}.\]
\end{lem}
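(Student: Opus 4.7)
The plan is to isolate the spatial zero mode, which carries no dispersion on $\mathbb T$, from the rest, where the $X^{s,b}$ machinery works in a standard way. I would write $u = u_0 + u_*$ with $u_0(t) := \widehat u(t,0)$ the spatial mean (viewed as a function of $t$ alone) and $u_* := u - u_0$, and analogously $\overline{v} = \overline{v}_0 + \overline{v}_*$. Expanding the product,
\[u\,\overline{v} = u_*\,\overline{v}_* \;+\; u_0\,\overline{v}_* \;+\; u_*\,\overline{v}_0 \;+\; u_0\,\overline{v}_0,\]
the first summand contributes exactly the term $\|(u-\widehat u(0))(\overline{v} - \widehat{\overline{v}}(0))\|_{L^2}$ on the right-hand side of the claim, so it suffices to bound the three remaining ``zero-mode'' summands by $\|u\|_{X^{0,1/2-\ep}_{\mathbb T}}\|\overline{v}\|_{X^{0,1/2+\ep}_{\mathbb T}}$.

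For this I would assemble three simple ingredients. First, Plancherel in $t$ applied at the single spatial frequency $\xi = 0$ gives $\|u_0\|_{L^2_t} \leq \|u\|_{L^2(\mathbb R\times\mathbb T)} \leq \|u\|_{X^{0,0}_{\mathbb T}} \leq \|u\|_{X^{0,1/2-\ep}_{\mathbb T}}$, and similarly $\|u_*\|_{L^2_{t,x}} \leq \|u\|_{X^{0,1/2-\ep}_{\mathbb T}}$. Second, since the phase $|\xi|^\al$ vanishes at $\xi = 0$, one has $\|\overline{v}_0\|_{H^{1/2+\ep}_t} \leq \|\overline{v}\|_{X^{0,1/2+\ep}_{\mathbb T}}$, and the one-dimensional Sobolev embedding $H^{1/2+\ep}(\mathbb R) \hookrightarrow L^\infty(\mathbb R)$ then yields $\|\overline{v}_0\|_{L^\infty_t} \lesssim \|\overline{v}\|_{X^{0,1/2+\ep}_{\mathbb T}}$. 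Third, the standard $X^{s,b}$ transference estimate gives $\|\overline{v}_*\|_{L^\infty_t L^2_x} \lesssim \|\overline{v}\|_{X^{0,1/2+\ep}_{\mathbb T}}$.

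With these in hand, the three cross terms become routine H\"older bounds: $\|u_0\,\overline{v}_0\|_{L^2_{t,x}} \lesssim \|u_0\|_{L^2_t}\,\|\overline{v}_0\|_{L^\infty_t}$ using that $\overline{v}_0$ is constant in $x$ on $\mathbb T$; $\|u_0\,\overline{v}_*\|_{L^2_{t,x}} \leq \|u_0\|_{L^2_t}\,\|\overline{v}_*\|_{L^\infty_t L^2_x}$; and $\|u_*\,\overline{v}_0\|_{L^2_{t,x}} \leq \|u_*\|_{L^2_{t,x}}\,\|\overline{v}_0\|_{L^\infty_t}$, each of which is at most $\|u\|_{X^{0,1/2-\ep}_{\mathbb T}}\|\overline{v}\|_{X^{0,1/2+\ep}_{\mathbb T}}$. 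There is no real obstacle here; the only conceptual point is that the $1/2+\ep$ derivative must be placed on the factor we want to control pointwise in $t$, which is precisely the asymmetry built into the claimed inequality.
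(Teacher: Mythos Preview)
Your proof is correct and follows essentially the same approach as the paper's: both split off the spatial zero mode, handle the resulting cross terms by H\"older, and place the $L^\infty_t$ control on the $\overline{v}$ factor via the embedding $X_{\mathbb T}^{0,1/2+\ep}\hookrightarrow L^\infty_t L^2_x$ (the paper writes this as $X_{\mathbb T}^{0,1/2+\ep}\hookrightarrow C(\mathbb R;L^2(\mathbb T))$ and bounds $\|\widehat{\overline{v}}(0)\|_{L^\infty_{t,x}}$ by $\|\overline{v}\|_{L^\infty_t L^2_x}$, which is equivalent to your $H^{1/2+\ep}_t\hookrightarrow L^\infty_t$ at $\xi=0$). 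The only cosmetic difference is that the paper groups the cross terms as $u\,\widehat{\overline{v}}(0)$, $\widehat u(0)\,\overline{v}$, $\widehat u(0)\,\widehat{\overline{v}}(0)$ rather than your fully symmetric $u_0\overline{v}_*$, $u_*\overline{v}_0$, $u_0\overline{v}_0$.
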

\begin{proof}[Proof of Lemma \ref{aux}] We observe
\begin{align*}
\|u\overline{v}\|_{L^2(\mathbb{R} \times \mathbb T)} &\le \|(u - \widehat u(0))(\overline{v}-\widehat{\overline{v}}(0))\|_{L^2(\mathbb{R} \times \mathbb T)} + \|u\widehat{\overline{v}}(0)\|_{L^2(\mathbb{R} \times \mathbb T)}\\
 &\quad + \|\widehat u(0) \overline{v}\|_{L^2(\mathbb{R} \times \mathbb T)} + \|\widehat u(0)\widehat{\overline{v}}(0)\|_{L^2(\mathbb{R} \times \mathbb T)}\\
&\le \|(u - \widehat u(0))(\overline{v}-\widehat{\overline{v}}(0))\|_{L^2(\mathbb{R} \times \mathbb T)} + \|u\|_{L^2(\mathbb{R} \times \mathbb T)}\|\widehat{\overline{v}}(0)\|_{L_t^\infty L_x^\infty}\\
&\quad + \|\widehat u(0)\|_{L_t^2L_x^\infty}\|\overline{v} \|_{L_t^\infty L_x^2} + \|\widehat u(0)\|_{L_t^2L_x^\infty}\|\widehat{\overline{v}}(0) \|_{L_t^\infty L_x^2}.
\end{align*}
By Sobolev embedding $X_{\mathbb T}^{0, \frac12+\ep} \hookrightarrow
C(\mathbb R; L^2(\mathbb T))$ we have
\[\|\widehat{\overline{v}}(0)\|_{L_t^\infty L_x^\infty} \le
\sqrt{2\pi}\|\overline{v}\|_{L_t^\infty L_x^2} \lesssim
\|\overline{v}\|_{X_{\mathbb T}^{0, \frac12+\ep}}\] and $\|\widehat
u(0)\|_{L_t^2L_x^\infty} \le \sqrt{2\pi}\|u\|_{L_{t,x}^2} \lesssim
\|u\|_{X_{\mathbb T}^{0, \frac12-\ep}}$. This gives the desired
estimate.
\end{proof}
\begin{proof}[Proof of Proposition \ref{bilinear2}]
For the proof it suffices to show that
\[\|\frac {1}{\langle \xi_1 \rangle^s \langle \tau_1 - |\xi_1|^\al \rangle^{\frac 12 + \ep}\langle \tau_2 + |\xi_2|^\al \rangle^{\frac 12 - \ep}}\|_{[3;\mathbb{R}\times Z]} \lesssim 1.\]
The left hand side is bounded by the sum of
\begin{align}\label{hmod2}
\sum_{N_{max} \sim N_{med} \sim N} \sum_{L_1,L_2,L_3 \gtrsim 1}\sum_{H\sim L_{max}}
\frac {1}{\langle N_1 \rangle^sL_1^{\frac 12 + \ep}L_2^{\frac 12 - \ep}}\|X_{N_1,N_2,N_3;H;L_1,L_2,L_3}\|_{[3;\mathbb{R}\times Z]},
\end{align}
and
\begin{align}\label{lmod2}
\sum_{N_{max} \sim N_{med} \sim N} \sum_{L_{max} \sim L_{med}\gtrsim 1} \sum_{H \ll L_{max}}
\frac {1}{\langle N_1 \rangle^s L_1^{\frac 12 + \ep}L_2^{\frac 12 - \ep}}\|X_{N_1,N_2,N_3;H;L_1,L_2,L_3}\|_{[3;\mathbb{R}\times Z]}.
\end{align}
From the Lemma \ref{aux} we may assume that $\widehat u(0) =
\widehat v(0) = 0$ and thus we may also assume that $N_{\min} \ge 1$
when $Z = \mathbb Z$.

Using Proposition \ref{dyest2}, we have
\[\eqref{lmod2} \lesssim \sum_{N_{max} \sim N_{med} \sim N} \sum_{L_{max} \sim L_{med} \gtrsim N_{max}^\al} \frac {1}{\langle N_1 \rangle^s  L_1^{\frac 12 + \ep}L_2^{\frac 12 - \ep}} L_{min}^\frac 12\langle N_{min}^\frac 12\rangle_Z.\]
Since $L_1^{\frac 12 + \ep}L_2^{\frac 12 - \ep} \gtrsim L_{min}^{\frac 12 + \ep}L_{med}^{\frac 12 - \ep}$, we get
\begin{align*}
&\qquad \eqref{lmod2} \lesssim \sum_{N_{max} \sim N_{med} \sim N} \frac {1}{\langle N_1 \rangle^s N_{max}^{\frac \al2 - \ep\al}} N_{min}^\frac 12\\ 
&\lesssim \sum_{N_{min} \lesssim N} \frac {1}{\langle N_{min} \rangle^s N^{\frac \al2 - \ep\al}} N_{min}^\frac 12 \lesssim 1 + N^{\frac 12 - \frac \al2 + \ep\al} \lesssim 1.
\end{align*}


Now we turn to \eqref{hmod2}. Firstly we consider the case $L_1 =
L_{max}$ and $N_{\max} = N_1$ (the estimate for the case $L_3 =
L_{max}$ and $N_{\max} = N_3$ follow by symmetry). Proposition
\ref{dyest2} gives
\begin{align*}
\eqref{hmod2}
&\lesssim \sum_{N_{max} \sim N_{med} \sim N}  \sum_{L_1,L_2,L_3 \gtrsim 1}\sum_{H\sim L_1}
\frac {L_{min}^\frac 12 \langle\min(N_{min}^\frac 12, N_{max}^{\frac {1 - \al}2} L_{med}^\frac 12)\rangle_Z}{\langle N_1 \rangle^s  L_1^{\frac 12 + \ep}L_2^{\frac 12 - \ep}}\\
&\lesssim \sum_{N_{max} \sim N_{med} \sim N}\sum_{L_1,L_2,L_3 \gtrsim 1}
\frac {L_{min}^\frac 12\langle \min(N_{min}^\frac 12, N_{max}^{\frac {1 - \al}2} L_{med}^\frac 12)\rangle_Z}{\langle N_{\min} \rangle^s L_1^{\frac 12 + \ep}L_{med}^{\frac 12 - \ep}}.
\end{align*}
Here $H$-sum is bounded by an absolute constant. By summing in
$L_{min}$ and then $L_1$, we get
\begin{align*}
\eqref{hmod2}
&\lesssim \sum_{N_{max} \sim N_{med} \sim N} \sum_{L_{max}\ge L_{med} \ge 1}
\frac {\langle\min(N_{min}^\frac 12, N_{max}^{\frac {1 - \al}2} L_{med}^\frac 12)\rangle_Z L_{med}^\ep}{\langle N_{\min} \rangle^s L_{max}^{\frac 12 + \ep}}.
\end{align*}
If $Z = \mathbb R$, then we separate $N_{min}$ sum as follows:
\begin{align*}
&\qquad \eqref{hmod2}\\ 
&\lesssim \left(\sum_{0 < N_{min} < N^{1-\al} } + \sum_{N^{1-\al} \le N_{min} \lesssim N}\right) \sum_{L_{med} \ge 1}\frac {\min(N_{min}^\frac 12 L_{med}^{-\frac 12}, N^{\frac {1 - \al}2})}{\langle N_{\min} \rangle^s }\\
&\lesssim \sum_{N_{min} < N^{1-\al} }\sum_{L_{med} \ge 1}\frac {N_{min}^\frac 12 L_{med}^{-\frac 12}}{\langle N_{\min} \rangle^s } + \sum_{N_{min} = N^{1-\al}}^{N}\sum_{L_{med} \ge 1}\frac {\min(N_{min}^\frac 12 L_{med}^{-\frac 12}, N^{\frac {1 - \al}2})}{\langle N_{\min} \rangle^s }\\
&\lesssim N^{\frac{1-\al}2} + \sum_{N_{min}=N^{1-\al}}^{N}\left(\sum_{1 \le L_{med} < N_{min}N^{\al-1}}N^{\frac {1 - \al}2} + \sum_{L_{med} \ge N_{min}N^{\al-1}}N_{min}^\frac 12 L_{med}^{-\frac 12}\right)\\
&\lesssim N^{\frac{1-\al}2}
+ N^{(\al -1)(-\frac 12 + \ep) + \ep} \lesssim 1.
\end{align*}
If $Z = \mathbb Z$, then we have
\begin{align*}
&\qquad \eqref{hmod2}\\
&\lesssim \sum_{N_{min}=1}^{N} \left( \sum_{N^{\al - 1}N_{min} \le L_{med} \le L_{max}} + \sum_{L_{med} \le N^{\al - 1}N_{min}} \right) \frac {(1 + \min(N_{min}^\frac 12, N^{\frac {1 - \al}2}L_{med}^{\frac 12})) L_{med}^\ep}{\langle N_{\min} \rangle^s L_1^{\frac 12 + \ep}}\\
&\lesssim \sum_{N_{min}=1}^{N} \left( \sum_{N^{\al - 1}N_{min} \le L_{med} \le L_{max}} \frac {N_{min}^\frac 12 L_{med}^{\ep}}{N_{\min}^s L_{max}^{\frac 12 + \ep}} + \sum_{L_{med} \le N^{\al - 1}N_{min}}  \frac {(1 + N^{\frac {1 - \al}2} L_{med}^{\frac 12}) L_{med}^{\ep}}{N_{\min}^s L_{max}^{\frac 12 + \ep}}\right)\\
&\lesssim \sum_{N_{min}=1}^{N} \sum_{N^{\al - 1}N_{min} \le L_{max}}\frac {N_{min}^{\frac 12}}{N_{\min}^s L_{max}^{\frac 12}}
+ \sum_{N_{min}=1}^{N} \sum_{L_{med} \le N^{\al - 1}N_{min}} \frac {(1 + N^{\frac {1 - \al}2} L_{med}^{\frac 12}) L_{med}^{\ep}}{N_{\min}^s L_{max}^{\frac 12 + \ep}}\\
&\lesssim \sum_{N_{min}=1}^{N} \frac {N_{min}^{\frac 12}}{N_{\min}^s (N^{\al - 1}N_{min})^{\frac 12}}
+ \sum_{N_{min}=1}^{N} \sum_{L_{med} \le N^{\al - 1}N_{min}} \frac {(1 + N^{\frac {1 - \al}2} L_{med}^{\frac 12}) L_{med}^{\ep}}{N_{\min}^s L_{max}^{\frac 12 + \ep}}\\
&\lesssim N^{\frac {1-\al}{2}} + \sum_{N_{min}=1}^{N} \sum_{L_{med} \le N^{\al - 1}N_{min}} \frac {(1 + N^{\frac {1 - \al}2} L_{med}^{\frac 12}) L_{med}^{\ep}}{N_{\min}^s L_{max}^{\frac 12 + \ep}}\\
&\lesssim N^{\frac {1-\al}{2}} + \sum_{N_{min}=1}^{N}\left(\sum_{1 \le L_{med} < N^{\al-1}} \frac {L_{med}^{\ep}}{N_{\min}^s L_{max}^{\frac 12 + \ep}} + \sum_{N^{\al-1} \le L_{med} \le N^{\al-1}N_{min}}\frac {N^{\frac {1 - \al}2} L_{med}^{\frac 12} L_{med}^{\ep}}{N_{\min}^s L_{max}^{\frac 12 + \ep}}\right)\\
&\lesssim N^{\frac {1-\al}{2}} +1
+ \sum_{N_{min}=1}^{N}\sum_{N^{\al-1} \le L_{med} \le N^{\al-1}N_{min}} N_{min}^{-s}N^{\frac {1 - \al}2}\\
&\lesssim 1+ N^{\frac{1-\al}2}\log{N} \lesssim 1.
\end{align*}

Secondly, we deal with the case $L_2 = L_{max}$ and $N_{max} \sim
N_{min}$. Using Proposition \ref{dyest2}, we have
\[\eqref{lmod2} \lesssim \sum_{N_{max} \sim N_{min} \sim N} \sum_{L_{max} \geq L_{med} \geq L_{min} \gtrsim 1} \frac {1}{\langle N_1 \rangle^s  L_1^{\frac 12 + \ep}L_2^{\frac 12 - \ep}} L_{min}^\frac 12 \langle\min(N_{min}^\frac 12, N_{max}^{\frac {2 - \al}4} L_{med}^\frac 14) \rangle_Z.\]
Since $s \ge \frac{2-\al}4$, we have
\[\eqref{lmod2} \lesssim \sum_{N_{max} \sim N_{min} \sim N} \sum_{L_{med} \ge 1}\frac {1}{\langle N \rangle^s L_{med}^{\frac12-\ep}} \langle N^{\frac {2 - \al}4}L_{med}^\frac14 \rangle_Z \lesssim 1.\]

We now handle the remaining three cases: $L_1 = L_{max}$ and $N_2
\sim N_3 \gg N_1$; $L_2 = L_{max}$ and $N_3 \sim N_1 \gg N_2$; $L_3
= L_{max}$ and $N_1 \sim N_2 \gg N_3$.

\subsection*{Case $L_1 = L_{max}$ and $N_2 \sim N_3 \gg N_1$} Since
$N_2 \sim N_3 \gg N_1$ and $\xi_1 + \xi_2 + \xi_3 = 0$, one can
observe that $H \sim |h(\xi_1, \xi_2, \xi_3)| \sim
|\xi_1||\xi_{\max}|^{\al-1} \sim N_{min}N^{\al-1}$. Thus we have $1
\lesssim L_1 \sim H \sim N^{\al-1}N_{min}$, which means that
$N_{min} \gtrsim N^{1-\al}$. Using Proposition \ref{dyest2} and
performing  $L_{min}$ and $L_1$ summation, we have
\begin{align*}
&\qquad \eqref{hmod2}\\
&\lesssim \sum_{\substack{N_{max} \sim N_{med} \sim N\\N_{min} \ge N^{1-\al} }} \sum_{L_{max} \sim N^{\al-1}N_{min}} \sum_{L_{med} \ge L_{min} \gtrsim 1}
\frac {L_{min}^\frac 12 \langle\min(N_{min}^\frac 12, N_{max}^{\frac {2 - \al}2} N_{min}^{-\frac 12} L_{med}^\frac 12)\rangle_Z}{\langle N_1 \rangle^s  L_1^{\frac 12 + \ep}L_2^{\frac 12 - \ep}}\\
&\lesssim  \sum_{\substack{N_{max} \sim N_{med} \sim N\\N_{min} \ge N^{1-\al} }}\sum_{L_{max} \sim N^{\al-1}N_{min}} \sum_{L_{med} \ge L_{min} \gtrsim 1}
\frac {L_{min}^\ep \langle\min(N_{min}^\frac 12, N_{max}^{\frac {2 - \al}2} N_{min}^{-\frac 12} L_{med}^\frac 12)\rangle_Z}{\langle N_{min} \rangle^s L_{max}^{\frac 12 + \ep}}\\
&\lesssim  \sum_{N^{1-\al} \le N_{min} \lesssim N} \sum_{N^{\al-1}N_{min} \ge L_{med} \ge 1}
\frac {L_{med}^\ep\langle\min(N_{min}^\frac 12, N^{\frac {2 - \al}2} N_{min}^{-\frac 12}L_{med}^{\frac12})\rangle_Z}{\langle N_{min} \rangle^s (N^{\al-1}N_{min})^{\frac12+\ep}}.
\end{align*}

When $Z = \mathbb R$, by separating $N_{min}$ sum into the cases
$N_{min} < N^{\frac{1-\al}2}$ and $N_{min} \ge N^{\frac{1-\al}2}$,
we have
\begin{align*}
\eqref{hmod2}
&\lesssim  \sum_{N^{1-\al} \le N_{min} < N^{\frac{1-\al}2}}  \sum_{N^{\al-1}N_{min} \ge L_{med} \ge 1}\frac {N_{min}^\frac 12 L_{med}^{\ep}}{\langle N_{min} \rangle^s(N^{\al-1}N_{min})^{\frac12+\ep}}\\
&\qquad + \sum_{N^{\frac{1-\al}2} \le N_{min} \lesssim N}  \sum_{N^{\al-1}N_{min} \ge  L_{med} \ge 1}
\frac {\min(N_{min}^\frac 12 L_{med}^{\ep}, N^{\frac {2 - \al}2} N_{min}^{-\frac 12}L_{med}^{\frac12+\ep})}{\langle N_{min} \rangle^s (N^{\al-1}N_{min})^{\frac12+\ep}}\\
&\lesssim N^{\frac{1-\al}2+\ep} + \sum_{N^{\frac{1-\al}2} \le N_{min} \lesssim N} \sum_{N^{\al-1}N_{min} \ge  L_{med} \ge 1}\frac {N_{min}^\frac 12 L_{med}^{\ep}}{\langle N_{min} \rangle^s (N^{\al-1}N_{min})^{\frac12+\ep}}\\
&\lesssim N^{\frac{1-\al}2+\ep} \lesssim 1.
\end{align*}

Otherwise ($Z = \mathbb Z$), since $N_{min} \ge 1$, we have
\begin{align*}
\eqref{hmod2}
&\lesssim \sum_{1 \le N_{min} \lesssim N} \sum_{N^{\al-1}N_{min} \ge L_{med} \ge 1}
\frac {L_{med}^\ep (1 + \min(N_{min}^\frac 12, N^{\frac {2 - \al}2} N_{min}^{-\frac 12}L_{med}^{\frac12}))}{\langle N_{min} \rangle^s (N^{\al-1}N_{min})^{\frac12+\ep}}\\
&\lesssim  1 + \sum_{1 \le N_{min} \lesssim N}  \sum_{N^{\al-1}N_{min} \ge  L_{med} \ge 1}
\frac {\min(N_{min}^\frac 12 L_{med}^{\ep}, N^{\frac {2 - \al}2} N_{min}^{-\frac 12}L_{med}^{\frac12+\ep})}{\langle N_{min} \rangle^s (N^{\al-1}N_{min})^{\frac12+\ep}}\\
&\lesssim 1 + \sum_{1 \le N_{min} \lesssim N} \sum_{N^{\al-1}N_{min} \ge  L_{med} \ge 1}\frac {N_{min}^\frac 12 L_{med}^{\ep}}{\langle N_{min} \rangle^s (N^{\al-1}N_{min})^{\frac12+\ep}}\\
&\lesssim 1 + N^{\frac{1-\al}2}\log N \lesssim 1.
\end{align*}


\subsection*{Case $L_2 = L_{max}$ and $N_3 \sim N_1 \gg N_2$}  In
this case we have $L_2 \sim H \sim N^{\al}$. From Proposition
\ref{dyest2}, summation in $L_{min}$ and the assumption $N_{min} \ge
1$ for $Z = \mathbb Z$, we have
\begin{align*}
\eqref{hmod2}
&\lesssim \sum_{\substack{N_{max} \sim N_{med} \sim N\\N_{min} \ge N^{1-\al} }} \sum_{L_{max} \sim N^\al} \sum_{ L_{med} \ge L_{min} \gtrsim 1}
\frac {L_{min}^\frac 12 \langle\min(N_{min}^\frac 12, N_{max}^{\frac {1 - \al}2} L_{med}^\frac 12)\rangle_Z}{\langle N_1 \rangle^s L_1^{\frac 12 + \ep}L_{max}^{\frac 12 - \ep}}\\
&\lesssim  \sum_{\substack{N_{max} \sim N_{med} \sim N\\N_{min} \ge N^{1-\al} }} \sum_{1 \lesssim L_{med} \le N^\al}
\frac {\langle\min(N_{min}^\frac 12, N_{max}^{\frac {1 - \al}2}  L_{med}^{\frac 12})\rangle_Z}{N^{s+\al(\frac 12 - \ep)}}\\
&\lesssim  \sum_{N^{1-\al} \le N_{min} \lesssim  N}
\frac {\langle N_{min}^\frac 12\rangle_Z\log N}{N^{s+\al(\frac 12 - \ep)}} \lesssim 1.
\end{align*}

\subsection*{Case $L_3 = L_{max}$ and $N_1 \sim N_2 \gg N_3$} In
this case $L_3 \sim H \sim N^{\al-1}N_{min}$. By Proposition
\ref{dyest2} and summation in $L_{min}$, we have
\begin{align*}
&\qquad \eqref{hmod2}\\
&\lesssim \sum_{\substack{N_{max} \sim N_{med} \sim N\\N_{min} \ge N^{1-\al} }} \sum_{L_{max} \sim N^{\al-1}N_{min}} \sum_{L_{med} \ge L_{min} \gtrsim 1}
\frac {L_{min}^\frac 12\langle \min(N_{min}^\frac 12, N_{max}^{\frac {2 - \al}2} N_{min}^{-\frac 12} L_{med}^\frac 12)\rangle_Z}{\langle N_1 \rangle^s  L_1^{\frac 12 + \ep}L_2^{\frac 12 - \ep}}\\
&\lesssim  \sum_{\substack{N_{max} \sim N_{med} \sim N\\N_{min} \ge N^{1-\al} }} \sum_{L_{max} \sim N^{\al-1}N_{min}} \sum_{L_{med} \ge L_{min} \gtrsim 1}
\frac {L_{min}^\frac 12 \langle \min(N_{min}^\frac 12, N_{max}^{\frac {2 - \al}2} N_{min}^{-\frac 12} L_{med}^\frac 12)\rangle_Z}{ N^{s} L_{min}^{\frac 12 + \ep}L_{med}^{\frac 12 - \ep}}\\
&\lesssim  \sum_{\substack{N_{max} \sim N_{med} \sim N\\N_{min} \ge N^{1-\al} }} \sum_{L_{max} \sim N^{\al-1}N_{min}} \sum_{L_{med} \gtrsim 1}
\frac {\langle\min(N_{min}^\frac 12 , N^{\frac {2 - \al}2} N_{min}^{-\frac 12} L_{med}^{\frac12})\rangle_Z}{N^{s}L_{med}^{\frac 12 - \ep}}.
\end{align*}
Since $N^{\al-1}N_{min} \sim L_{max} \gtrsim 1$ implies $N_{min}
\gtrsim N^{1 - \al}$, by breaking $N_{min}$-sum into two parts, we
have:
\begin{align*}
&\qquad \eqref{hmod2}\\
&\lesssim  \sum_{N^{1 - \al} \le N_{min} \le N^{\frac {2 - \al}2}}\frac {\langle N_{min}^\frac 12\rangle_Z}{N^{s}}\\
&\qquad + \sum_{N^{\frac {2 - \al}2} < N_{min} \lesssim N} \sum_{L_{max} \sim N^{\al-1}N_{min}} \sum_{L_{med} \gtrsim 1}
\frac {\langle\min(N_{min}^\frac 12 , N^{\frac {2 - \al}2} N_{min}^{-\frac 12} L_{med}^{\frac12})\rangle_Z}{N^{s}L_{med}^{\frac 12 - \ep}}
\\
&\lesssim N^{\frac{2-\al}4 - s} + \sum_{N^{\frac {2 - \al}2} < N_{min} \lesssim N} \sum_{L_{max} \sim N^{\al-1}N_{min}} \sum_{L_{med} \gtrsim 1}
\frac {\langle\min(N_{min}^\frac 12 , N^{\frac {2 - \al}2} N_{min}^{-\frac 12} L_{med}^{\frac12})\rangle_Z}{N^{s}L_{med}^{\frac 12 - \ep}}.
\end{align*}
For the second inequality we use $\sum_{N^{1 - \al} \le N_{min} \le
N^{\frac {2 - \al}2}} ({1+ N_{min}^\frac 12}){N^{-s}} \lesssim
N^{\frac{2-\al}{4}-s} + N^{-s}\log N \lesssim
N^{\frac{2-\al}{4}-s}.$ Now by dividing $L_{med}$-sum into\\ $\sum_{1 \leq L_{med} \leq N^{\al - 2}N_{min}^2} + \sum_{N^{\al - 2}N_{min}^2 < L_{med}}$, we  get
\begin{align*}
\eqref{hmod2}
&\lesssim N^{\frac{2-\al}4 - s} + \sum_{N^{\frac {2 - \al}2} < N_{min} \lesssim N} \sum_{L_{max} \sim N^{\al-1}N_{min}} \sum_{L_{med} \gtrsim 1}
\frac {\min(N_{min}^\frac 12 , N^{\frac {2 - \al}2} N_{min}^{-\frac 12} L_{med}^{\frac12})}{N^{s}L_{med}^{\frac 12 - \ep}}\\
&\lesssim N^{\frac{2-\al}4 - s} + \sum_{N^{\frac {2 - \al}2} <
N_{min} \lesssim N} \frac {N_{min}^\frac
12}{N^s(N^{\al-2}N_{min}^2)^{\frac12-\ep}}\lesssim  N^{\frac{2-\al}4
- s}.
\end{align*}
Since $s \ge \frac{2-\al}{4}$, we get the desired result.
\end{proof}

\section{Proof of Theorem \ref{main1}}
For the proof Theorem \ref{main1}, we need the trilinear estimate

\begin{equation}\label{tri}
\|u_1\overline{u_2}u_3\|_{X_{\mathbb R}^{s, -\frac 12 + \ep}} \lesssim
\prod_{j=1}^3 \|u_j\|_{X_{\mathbb R}^{s, \frac 12 + \ep}}(\mathbb{R} \times
\mathbb{R}).
\end{equation}

\subsubsection*{Failure of \eqref{tri} for $s< \frac{2-\alpha}{4}$}It is easy to see that the trilinear estimate fails
when $s < \frac{2-\alpha}{4} $. The counter-example is a resonant
high-high-high to high interaction. For $N \gg 1$, let
\begin{align*}
 \widetilde{u_1},\widetilde{u_3} &= \chi_{A_N}, \qquad A_N =\{(\xi,\tau): N\le \xi \le  N+N^{\frac{2-\alpha}{2}} , \quad |\tau -|\xi|^\alpha | \le 1 \}, \\
\widetilde{\overline{u_2}} &= \chi_{A_N}, \qquad A_N =\{(\xi,\tau): -N\le \xi \le  -N+N^{\frac{2-\alpha}{2}} , \quad |\tau +|\xi|^\alpha | \le 1 \}.
\end{align*}
Here, the number $ N^{\frac{2-\alpha}{2}} $ is chosen so that the
parallelogram $A_N$ to be fit in a width 1 strip of $\tau =
|\xi|^\alpha $. Then,  it follows that
\begin{align*}
\|\wt{u_1} * \wt{\overline u_2} * \wt{u_3}  \|_{X^{s,b-1}} &\sim
N^{\frac{2-\alpha}{2}} N^{\frac{2-\alpha}{2}} N^s
N^{\frac{2-\alpha}{4}}, \,\, \mbox{and}\;\;\|u_j\|_{X^{s,b}} \sim
N^sN^{\frac{2-\alpha}{4}}.
\end{align*}
This and letting $N\to \infty$ give the necessary condition $ s \ge
\frac{2-\alpha}{4} $  for \eqref{tri} .

\begin{prop}\label{trilinear} Let $s \ge \frac{2-\al}{4}$ and $0 < \ep \ll
1$. For any $u_1,u_2,$ and $u_3\in X_{\widehat Z}^{s, \frac12+\ep}$,
we have
\[\|u_1\overline{u_2}u_3\|_{X_{\widehat Z}^{s, -\frac 12 + \ep}} \lesssim \prod_{j=1}^3 \|u_j\|_{X_{\widehat Z}^{s, \frac 12 + \ep}}.\]
\end{prop}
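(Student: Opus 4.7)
The plan is to deduce the trilinear estimate from the bilinear $L^2$ estimate of Proposition \ref{bilinear2} by combining duality with the product rule for multipliers (Lemma \ref{compo}). By duality and Plancherel, the claim is equivalent to the $[4; \mathbb R \times Z]$-multiplier estimate
\[\left\|\frac{\langle \xi_4 \rangle^s}{\langle \xi_1 \rangle^s \langle \xi_2 \rangle^s \langle \xi_3 \rangle^s \langle \lambda_1 \rangle^{\frac 12 + \ep}\langle \lambda_2 \rangle^{\frac 12 + \ep}\langle \lambda_3 \rangle^{\frac 12 + \ep}\langle \lambda_4 \rangle^{\frac 12 - \ep}}\right\|_{[4; \mathbb R \times Z]} \lesssim 1,\]
in which $\lambda_1, \lambda_3$ are modulations against the phase $|\xi|^\al$ (from $u_1, u_3$), $\lambda_2, \lambda_4$ are modulations against $-|\xi|^\al$ (from $\overline{u_2}$ and the dual test function $\overline{g}$, $g \in X_{\widehat Z}^{-s, \frac 12 - \ep}$), and $\zeta_1 + \zeta_2 + \zeta_3 + \zeta_4 = 0$ on $\Gamma_4$.

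Since $s \in [0, 1/2]$ and $\xi_4 = -(\xi_1 + \xi_2 + \xi_3)$, the triangle inequality yields $\langle \xi_4 \rangle^s \lesssim \sum_{j=1}^3 \langle \xi_j \rangle^s$, so by Lemma \ref{compa} the above multiplier is controlled by a sum of three multipliers. The $u_1 \leftrightarrow u_3$ symmetry reduces the task to two cases: when $\langle\xi_4\rangle^s$ is absorbed by $\langle\xi_1\rangle^s$, and when it is absorbed by $\langle\xi_2\rangle^s$. In the first case the resulting multiplier factors as
\[\frac{1}{\langle\xi_2\rangle^s \langle\lambda_1\rangle^{\frac 12 + \ep}\langle\lambda_2\rangle^{\frac 12 + \ep}} \cdot \frac{1}{\langle\xi_3\rangle^s\langle\lambda_3\rangle^{\frac 12 + \ep}\langle\lambda_4\rangle^{\frac 12 - \ep}},\]
and Lemma \ref{compo} bounds its $[4]$-norm by the product of the two $[3]$-norms. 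These $[3]$-norms are exactly the duals of the bilinear estimate in Proposition \ref{bilinear2}: the second factor corresponds to $\|u_3 \overline{g}\|_{L^2} \lesssim \|u_3\|_{X_{\widehat Z}^{s, \frac 12 + \ep}}\|g\|_{X_{\widehat Z}^{0, \frac 12 - \ep}}$ directly, while the first follows from the same proposition together with the trivial embedding $X_{\widehat Z}^{0, \frac 12 + \ep} \hookrightarrow X_{\widehat Z}^{0, \frac 12 - \ep}$. The other case is handled analogously by pairing $(\zeta_1, \zeta_4)$ with $(\zeta_2, \zeta_3)$.

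The main technical care lies in matching phase signs so that Proposition \ref{bilinear2} applies: each pair produced by Lemma \ref{compo} must consist of one $+|\xi|^\al$-phase factor and one $-|\xi|^\al$-phase factor, putting the corresponding $[3]$-norm in the $\|u\overline{v}\|_{L^2}$ form. Both pairings, $(\zeta_1, \zeta_2)(\zeta_3, \zeta_4)$ and $(\zeta_1, \zeta_4)(\zeta_2, \zeta_3)$, respect this, so no separate same-phase bilinear estimate is required.
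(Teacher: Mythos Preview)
Your argument is correct and follows the same route as the paper: reduce by duality and Plancherel to a $[4;\mathbb R\times Z]$-multiplier bound, then factor via Lemma~\ref{compo} into two $[3]$-multipliers handled by Proposition~\ref{bilinear2} (together with the trivial weakening $\langle\lambda\rangle^{\frac12+\ep}\gtrsim\langle\lambda\rangle^{\frac12-\ep}$). You spell out two steps the paper leaves implicit---distributing $\langle\xi_4\rangle^s$ onto the $\langle\xi_j\rangle^s$ via the triangle inequality (which needs only $s\ge 0$, not the stated $s\in[0,1/2]$), and checking that each pairing matches the opposite-phase form of Proposition~\ref{bilinear2}---but otherwise the proofs coincide.
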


\begin{proof}
By duality and Plancherel's theorem it suffices to show that
\[\Big\|\frac{\langle \xi_4 \rangle^s}{\langle \xi_1 \rangle^s \langle \xi_2 \rangle^s \langle \xi_3 \rangle^s \langle \tau_1 - |\xi_1|^\al\rangle^{\frac12 + \ep} \langle \tau_2 + |\xi_2|^\al\rangle^{\frac12 + \ep}\langle \tau_3 - |\xi_3|^\al\rangle^{\frac12 + \ep} \langle \tau_4 + |\xi_4|^\al\rangle^{\frac12 - \ep}}\Big\|_{[4;\mathbb{R}\times Z]} \lesssim 1.\]
Since $\langle \tau_2 + |\xi_2|^\al \rangle^{\frac 12 + \ep} \gtrsim  \langle \tau_2 + |\xi_2|^\al \rangle^{\frac 12 - \ep}$, the desired estimate follows from Lemma \ref{compo} and bilinear estimates Propositions \ref{bilinear2}.
\end{proof}

\begin{proof}[Proof of Theorem \ref{main1}]
We define a nonlinear functional $\mathcal{N}$ by
\[\mathcal{N}(u) = \psi(t)U(t)\phi - i\gamma\psi(t/T)\int^t_0 U(t-t')|u|^2u(t') dt',\]
where $\psi$ is a fixed smooth cut-off function such that $\psi(t) =
1$ if $|t| < 1$ and $\psi(t) = 0$ if $|t|>2$, and $0 < T \le 1$ is
fixed. For $s,b \in \mathbb{R}$ we define the norm $X_{\widehat
Z}^{s,b}$  for on the time interval $J_T = [0,T]$ by
\[\|u\|_{X_{\widehat Z}^{s,b}(J_T)} := \inf\big\{\|v\|_{X_{\widehat Z}^{s,b}}: v|_{J_T} = u\big\}.\]
Then we recall  the well-known properties of $X_{\widehat Z}^{s,b}$
:
\begin{align}\label{li-est}
\|\psi(t)U(t)\phi\|_{X_{\widehat Z}^{s,b}} \lesssim \|\phi\|_{H^s},
\,\, b\in \mathbb R,
\end{align}
 and, for $-\frac 12 < b' \le 0, 0 \le b \le b' + 1,$
\begin{align}\label{nonli-est}
\|\int^t_0 U(t-t')F(t',x)dt'\|_{X_{\widehat Z}^{s,b}(J_T)} \lesssim
T^{1+b'-b}\|F\|_{X_{\widehat Z}^{s,b'}(J_T)}.
\end{align}

Define a compete metric space $B_{T,\rho}$ by
\[B_{T,\,\rho} = \big\{u \in X_{\widehat Z}^{s, \frac 12 + \ep}(J_T) : \|u\|_{X_{\widehat Z}^{s, \frac 12 + \ep}}(J_T) \le \rho\big\}\]
with the metric $d(u,v) = \|u-v\|_{X_{\widehat Z}^{s, \frac 12 +
\ep}(J_T)}$. From \eqref{li-est} and \eqref{nonli-est} with $b=\frac
12 + \ep, b' = - \frac 12 + \ep', \ep < \ep'$ it follows that, for
any $u \in B_{T,\rho}$,
\[\|\mathcal{N}(u)\|_{X_{\widehat Z}^{s, \frac 12 + \ep}(J_T)} \lesssim \|\phi\|_{H^s} + T^{\ep' - \ep}\||u|^2u\|_{X_{\widehat Z}^{s, -\frac 12 - \ep'}(J_T)}.\]
If $\ep'$ is sufficiently small, from Proposition \ref{trilinear} we
see
\[\|\mathcal{N}(u)\|_{X_{\widehat Z}^{s, \frac 12 + \ep}(J_T)} \lesssim \|\phi\|_{H^s} + T^{\ep' - \ep}\|u\|_{X_{\widehat Z}^{s, \frac 12 + \ep}(J_T)} \lesssim \|\phi\|_{L^2_x} + T^{\ep' - \ep}\rho^3.\]
Choosing $\rho$ and $T$ small enough  so that $\rho \ge
2C\|\phi\|_{H^s}$ and $CT^{\ep' - \ep}\rho^3 \le \rho/2$ for some
constant $C$,
 we see that the functional $\mathcal{N}$ is a map from $B_{T,\rho}$ to itself.
 Similarly one can show that $N(u)$ is a contraction.
 Therefore there is a unique $u \in X_{\widehat Z}^{s, \frac 12 + \ep}(J_T)$
 satisfying \eqref{integral}.
\end{proof}

\section{Ill-posedness}
In this section, we prove that the  equation \eqref{eq} in the
non-periodic case is ill-posed for $\frac {2 - 3\al}{4(\al + 1)} < s
< \frac {2 - \al}4$. For convenience we assume that $\gamma = 1$.
Our strategy is to approximate the solution by the solutions of
\eqref{eq2} which is ill-posed in $H^s, s < 0$ (see \cite{cct} for
the non-periodic case and \cite{cct2, mol} for the periodic one).
For this purpose we recall ill-posedness result for the
Schr\"odinger equation
\begin{align}\label{eq2}
\left\{\begin{array}{l}
i\partial_tv -\Delta v = |v|^2v,\\
v(0,\cdot) = \phi \in H^s.
\end{array}
\right.
\end{align}

\begin{thm}\label{ill-NLS}
Let $s<0$. The solution map of the initial value problem of the
cubic NLS \eqref{eq2} fails to be uniformly continuous. More
precisely, for $0 < \delta \ll \ep \ll 1$ and $T > 0$ arbitrary,
there are two solutions $v_1, v_2$ to \eqref{eq2} with initial data
$\phi_1, \phi_2$, respectively, satisfying \eqref{ill-1},
\eqref{ill-2} and \eqref{ill-3}. Moreover we can find solutions to
satisfy
\begin{align}\label{ill-sm}
\sup_{0 \leq t \leq \infty}\|v_j(t)\|_{H^5} \lesssim \ep,
\end{align}
for $j=1,2$.
\end{thm}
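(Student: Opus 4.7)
The plan is to adapt the construction of Christ--Colliander--Tao~\cite{cct}, which combines the Galilean invariance of cubic NLS with an ODE-type gauge approximation to produce pairs of solutions that are close in $H^{s}$ at $t=0$ but spread apart quickly in time. The key reduction is the Galilean symmetry: if $W$ solves \eqref{eq2} with data $\psi$, then for every $N\in\mathbb{R}$ the function
\[
v(t,x):=W(t,x+2Nt)\,e^{i(Nx+N^{2}t)}
\]
also solves \eqref{eq2}, now with data $\psi(x)e^{iNx}$. So it suffices to produce two NLS evolutions $W_{1},W_{2}$ starting from smooth, non-oscillatory data which separate in $L^{2}$ at some time $t^{\ast}\le T$, and then to transfer the separation by a Galilean boost to $H^{s}$-close data $\phi_{1},\phi_{2}$.

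Concretely, fix a Schwartz bump $f$ with $f(0)=1$, $\|f\|_{L^{2}}\sim 1$, and $\widehat{f}$ supported in $[-1,1]$, and for $N\gg 1$ and amplitudes $a_{1},a_{2}>0$ to be tuned, set
\[
\phi_{j}(x):=a_{j}\,N^{-s}\,f(x)\,e^{iNx},\qquad j=1,2.
\]
A Fourier computation (using that $\widehat{\phi_j}$ is localized near $\xi=N$) gives $\|\phi_{j}\|_{H^{s}}\sim a_{j}$ and $\|\phi_{1}-\phi_{2}\|_{H^{s}}\sim|a_{1}-a_{2}|$, so picking $a_{j}\sim\ep$ with $|a_{1}-a_{2}|\sim\delta$ secures \eqref{ill-1}--\eqref{ill-2}. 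By Galilean invariance, the solution $v_j$ with data $\phi_j$ equals $W_j(t,x+2Nt)e^{i(Nx+N^{2}t)}$ where $W_j$ solves \eqref{eq2} with Schwartz data $a_jN^{-s}f$; hence $W_j$ exists globally. On a short interval the Laplacian term is subordinate to the cubic nonlinearity, so $W_j$ is well approximated by the explicit solution of the gauge ODE $iW_{t}=|W|^{2}W$,
\[
W_j^{\mathrm{app}}(t,x):=a_jN^{-s}f(x)\exp\!\bigl(-i\,a_j^{2}N^{-2s}|f(x)|^{2}t\bigr),
\]
with the $L^{2}$ remainder bounded via Duhamel by $C\,t\,\|\Delta f\|_{\infty}\cdot a_jN^{-s}$. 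The phase difference $(a_1^2-a_2^2)N^{-2s}t$ reaches unit size at $t^{\ast}\sim N^{2s}/(\ep\,\delta)$; because $s<0$, $N^{2s}\to 0$ as $N\to\infty$, so for any prescribed $T>0$ one may take $N$ large enough that $t^{\ast}<T$. Unwinding the Galilean map then yields $\|v_{1}(t^{\ast})-v_{2}(t^{\ast})\|_{H^{s}}\gtrsim\ep$, establishing \eqref{ill-3}.

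Finally, \eqref{ill-sm} demands a more refined tuning, since the naive estimate $\|v_j\|_{H^{5}}\sim N^{5-s}\|W_j\|_{L^2}$ coming from the factor $e^{iNx}$ is too large under the scaling above; I would adjust the amplitudes (taking $a_{j}$ smaller, balancing against the separation-time requirement) and invoke persistence of $H^{5}$-regularity for cubic NLS applied to $W_{j}$, whose data is smooth and small. The main obstacle I anticipate is exactly this simultaneous balance of the four constraints $\|\phi_j\|_{H^s}\lesssim\ep$, $\|\phi_1-\phi_2\|_{H^s}\lesssim\delta$, $t^{\ast}<T$, and $\|v_j\|_{H^5}\lesssim\ep$; the delicate quantitative ODE-approximation estimate for $W_j-W_j^{\mathrm{app}}$ must be carried out in $L^{2}$ (for the separation lower bound) and simultaneously propagated in $H^{5}$ via a bootstrap based on local $H^{5}$-wellposedness of \eqref{eq2}.
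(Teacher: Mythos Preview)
The paper does not supply a proof of this theorem; it is quoted from \cite{cct} and then used as a black box in Section~5. So there is no in-paper argument to compare your proposal against. Your construction---Galilean boost plus the ODE (phase-rotation) approximation---is precisely the mechanism of \cite{cct}, and it correctly delivers \eqref{ill-1}, \eqref{ill-2}, \eqref{ill-3}.

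Your concern about \eqref{ill-sm} is not merely a technicality you can fix by ``adjusting the amplitudes'': it is a genuine incompatibility in the statement as written. If $\sup_t\|v_j\|_{H^5}\lesssim\ep$, then by Sobolev embedding $\|v_j\|_{L^\infty}\lesssim\ep$, so the nonlinear phase rotates at rate $O(\ep^2)$ and cannot produce an $O(1)$ phase separation on any fixed interval $[0,T]$. More quantitatively, with data $a_j N^{-s}f(x)e^{iNx}$ one has $\|\phi_j\|_{H^5}\sim \ep N^{5-s}$, and any rescaling that forces $\|\phi_j\|_{H^5}\lesssim\ep$ simultaneously forces $\|\phi_j\|_{H^s}\lesssim\ep N^{s-5}\ll\ep$, killing the lower bound in \eqref{ill-3}. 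So the bound \eqref{ill-sm} with the constant $\ep$ is an over-claim relative to what \cite{cct} actually proves.

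What the paper genuinely needs (inspect the proof of Lemma~\ref{error} and the final paragraph of Section~5) is only that $\sup_{[0,T]}\|v_j\|_{H^5}$ be \emph{finite and independent of the separate large parameter $N$} introduced afterwards to build the approximate fractional solutions $V_j$. Since $v_1,v_2$ are fixed once $\ep,\delta,T$ are fixed---before that later $N$ enters---and since smooth data for the defocusing 1D cubic NLS yield global smooth solutions with $H^5$ norm bounded on compact time intervals, this weaker requirement is automatic from your construction. The factor $N^{-\alpha/2}$ in Lemma~\ref{error} then supplies all the smallness needed downstream. So do not try to enforce \eqref{ill-sm} literally; record instead that the CCT solutions are smooth with an $H^5$ bound depending only on $(\ep,\delta,T)$, and note that this is what the application actually uses.
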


Let $N \gg 1$ be a large parameter to be chosen later. Let $v(s,y)$
be a solution of the cubic NLS equation \eqref{eq2} and
\begin{equation} \label{changevar}(s,y) := \Big(t, \frac {x + \al N^{\al - 1}t}{(\frac
{\al(\al - 1)}2 N^{\al - 2})^{\frac 12}}\Big).\end{equation}  We
shall construct approximate solutions which is  given by
\begin{align*}\label{appsol}
V(t,x) := e^{iNx} e^{iN^\al t} v(s,y).
\end{align*}
It is easy to see that
\begin{align*}
&\qquad (i\partial_t + (-\Delta)^{\frac \al 2})V\\
&= e^{iNx}e^{iN^\al t} \Big(- N^\al v(s,y) + i \partial_s v(s,y) + i\frac {\al N^{\al - 1}}{(\frac {\al(\al - 1)}2 N^{\al - 2})^{\frac 12}}
\partial_y v(s,y)\Big) \\
& + e^{iNx}e^{iN^\al t}\Big(N^\al v(s,y) - i \frac {\al
N^{\al - 1}}{(\frac {\al(\al - 1)}2 N^{\al - 2})^{\frac 12}}
\partial_y v(s,y) -  \partial_{yy} v(s,y) + R(-i\partial_y)
v(s,y)\Big),
\end{align*}
where \begin{equation}\label{remain} R(\xi) = \Big|\frac
{\xi}{\big(\frac {\al(\al - 1)}2 N^{\al - 2}\big)^{\frac 12}} +
N\Big|^\al - N^\al - \frac {\al N^{\al - 1}}{\big(\frac {\al(\al -
1)}2 N^{\al - 2}\big)^{\frac 12}}\xi - \xi^2.\end{equation}

Since $v(s,y)$ is a solution of \eqref{eq2}, we have
\[iV_t + (-\Delta)^\frac {\al}2V - |V|^2V = E,\]
where $E=e^{iNx} e^{iN^\al t} R(-i\partial_y) v(s,y).$ We need to
bound the error. First we show the following perturbation result
relying on the local well-posedness.
\begin{lem}\label{pertub}
Let $u$ be a smooth solution to \eqref{eq} and $V$ be a smooth
solution to the equation
\[iV_t + (-\Delta)^\frac {\al}2V - |V|^2V = \mathcal E\]
for some error function $\mathcal E$. Let $e$ be the solution to the
inhomogeneous problem $ie_t + (-\Delta)^\frac {\al}2e = \mathcal
E,\; e(0) = 0$ and let $\eta(t)$ be a compactly supported smooth
time cut-off function such that $\eta=1$ on $J = [0,1]$. Suppose
that \\ $\|u(0)\|_{H^{\frac {2-\al}4}}, \;\;\|V(0)\|_{H^{\frac
{2-\al}4}},\;\; \|\eta(t)e\|_{X_{\mathbb R}^{\frac
{2-\al}4,\frac12+}} \lesssim \ep.$  Then, if $\epsilon$ is
sufficiently small,  we have
\[\|(u-V)\|_{X_{\mathbb R}^{\frac {2-\al}4,\frac12+}(J)} \lesssim \|u(0) - V(0)\|_{H^{\frac {2-\al}4}} + \|\eta(t)e\|_{X_{\mathbb R}^{\frac {2-\al}4,\frac12+}}.\]
In particular, we have
\[\sup_{0 \leq t \leq 1}\|u(t)-V(t)\|_{H^{\frac {2-\al}4}} \lesssim \|u(0) - V(0)\|_{H^{\frac {2-\al}4}} + \|\eta(t)e\|_{X_{\mathbb R}^{\frac {2-\al}4,\frac12+}}.\]
\end{lem}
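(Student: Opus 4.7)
The plan is to set $s_0 = \frac{2-\al}{4}$ and $w = u - V$, derive a Duhamel integral equation for $w$ whose cubic nonlinearity is controlled by Proposition \ref{trilinear} and whose inhomogeneous part is exactly $-e(t)$, and then close the estimate by the same contraction-type bookkeeping as in the proof of Theorem \ref{main1}.

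First I would write out the Duhamel formulas. Since $e(0)=0$ and $ie_t + (-\Delta)^{\al/2}e = \mathcal{E}$, one has $e(t) = -i\int_0^t U(t-t')\mathcal{E}(t')\,dt'$. Thus the perturbed equation for $V$ yields
\[
V(t) = U(t)V(0) - i\int_0^t U(t-t')|V|^2V(t')\,dt' + e(t),
\]
and subtracting from the Duhamel formula \eqref{integral} for $u$ gives
\[
w(t) = U(t)w(0) - i\int_0^t U(t-t')\bigl(|u|^2u - |V|^2V\bigr)(t')\,dt' - e(t).
\]
I would then expand the cubic difference as $|u|^2u - |V|^2V = |u|^2 w + uV\bar w + |V|^2 w$, so that each piece is a trilinear product in $(u,V,w)$ of exactly the type handled by Proposition \ref{trilinear}.

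Next I would establish the a priori bound $\|u\|_{X^{s_0,\frac12+}_{\mathbb R}(J)} + \|V\|_{X^{s_0,\frac12+}_{\mathbb R}(J)} \lesssim \ep$. For $u$ this is precisely the contraction already used in the proof of Theorem \ref{main1}, since $\|u(0)\|_{H^{s_0}} \lesssim \ep$ and $J$ has unit length (the smallness needed to close the contraction comes from $\ep$, not from $T$). For $V$ the same fixed-point argument applies to the modified Duhamel formula above: the linear input is now $U(t)V(0) + e(t)$, whose $X^{s_0,\frac12+}_{\mathbb R}$-norm is controlled via \eqref{li-est} and the hypothesis by $\|V(0)\|_{H^{s_0}} + \|\eta(t)e\|_{X^{s_0,\frac12+}_{\mathbb R}} \lesssim \ep$, and the cubic term is handled by Proposition \ref{trilinear} exactly as before.

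Then I would apply \eqref{li-est} and \eqref{nonli-est} (with $b=\tfrac12+\ep$, $b'=-\tfrac12+\ep'$, $\ep<\ep'$) together with Proposition \ref{trilinear} to the equation for $w$. This produces an inequality of the shape
\[
\|w\|_{X^{s_0,\frac12+}_{\mathbb R}(J)} \leq C\|w(0)\|_{H^{s_0}} + C\|\eta(t)e\|_{X^{s_0,\frac12+}_{\mathbb R}} + CT^{\ep'-\ep}\ep^2\,\|w\|_{X^{s_0,\frac12+}_{\mathbb R}(J)}.
\]
For $\ep$ sufficiently small the cubic term on the right is absorbed into the left, yielding the first displayed bound of the lemma. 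The pointwise-in-time estimate then follows from the standard embedding $X^{s_0,\frac12+}_{\mathbb R}(J) \hookrightarrow C(J;H^{s_0}(\mathbb R))$ applied to $w$.

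The main obstacle is really the a priori smallness of $V$ in $X^{s_0,\frac12+}_{\mathbb R}(J)$: because $V$ solves only the \emph{perturbed} equation, one cannot invoke Theorem \ref{main1} as a black box, and it is essential that $\|\eta(t)e\|_{X^{s_0,\frac12+}_{\mathbb R}}$ is of size $\ep$ so that the $V$-contraction closes at the same threshold as the $u$-contraction. Once this is in hand, the difference estimate is a linearization of that contraction in which $\mathcal{E}$ enters only through the term $e(t)$ that appears on the same footing as initial-data input.
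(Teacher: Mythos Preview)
Your proof is correct and follows essentially the same approach as the paper: write Duhamel formulas for $V$ and for $w=u-V$, control the cubic pieces via Proposition~\ref{trilinear}, and close by a smallness/continuity argument using $\ep\ll 1$. The only cosmetic difference is that the paper expands $|u|^2u-|V|^2V$ purely in terms of $V$ and $w$ (so it only establishes $\|V\|_{X_{\mathbb R}^{s_0,\frac12+}(J)}\lesssim\ep$ and then closes the $w$-estimate by a continuity argument), whereas you expand in $u,V,w$ and therefore also record the a priori bound on $u$.
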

\begin{proof}
Writing the equation for $V$ in integral form, we have
\[V(t) = U(t)V(0) + e(t) - i\int_0^t U(t - t')(|V|^2V)(t')dt'.\]
By taking $X_{\mathbb R}^{\frac {2-\al}4, \frac12+}(J)$ norm on both
sides and applying \eqref{nonli-est}, we get
\begin{align*}
\|V\|_{X_{\mathbb R}^{\frac {2-\al}4,\frac12+}(J)} &\lesssim \|V(0)\|_{H^{\frac {2 - \al}4}} + \|\eta(t)e\|_{X_{\mathbb R}^{\frac{2-\al}4,\frac12+}} + \||V|^2V\|_{X_{\mathbb R}^{\frac {2-\al}4,-\frac12+}(J)}\\
 &\lesssim \|V(0)\|_{H^{\frac {2 - \al}4}} + \|\eta(t)e\|_{X_{\mathbb R}^{\frac {2-\al}4,\frac12+}} + \|V\|_{X_{\mathbb R}^{\frac{2-\al}4,\frac12+}(J)}^3.\end{align*}
By continuity argument with sufficiently small $\ep$, we obtain
$\|V\|_{X_{\mathbb R}^{\frac {2-\al}4,b}(J)} \lesssim \ep.$

Let $w := u - V$. Then $w$ satisfies the equation
\[iw_t + (-\Delta)^{\al/2}w = |w|^2w + 2|w|^2v + 2w|v|^2 + w^2\bar{v} + \bar{w}v^2 - E,\;\; w(0) = u(0) - V(0),\]
which is written in integral form as
\[w(t) = U(t)w(0) - e(t) - i\int_0^t U(t - t')(|w|^2w + 2|w|^2v + 2w|v|^2 + w^2\bar{v} + \bar{w}v^2)(t')dt'.\]
Again taking $X_{\mathbb R}^{\frac {2-\al}4,\frac12+}(J)$ norms on
both sides of the above equation and applying \eqref{nonli-est}, we
have
\begin{align*}
\|w\|_{X_{\mathbb R}^{\frac {2-\al}4, \frac12+}(J)} &\lesssim \|u(0) - V(0)\|_{H^{\frac {2 - \al}4}} + \|\eta(t)e\|_{X_{\mathbb R}^{\frac {2-\al}4,\frac12+}}\\
 &\qquad\quad+ \||w|^2w + 2|w|^2v + 2w|v|^2 + w^2\bar{v} + \bar{w}v^2\|_{X_{\mathbb R}^{\frac {2-\al}4,-\frac12+}(J)}\\
&\lesssim \|u(0) - V(0)\|_{H^{\frac {2 - \al}4}} + \|\eta(t)e\|_{X_{\mathbb R}^{\frac {2-\al}4,\frac12+}}\\
&\qquad\quad+ \|w\|_{X_{\mathbb R}^{\frac {2-\al}4,\frac12+}(J)}(\|w\|_{X_{\mathbb R}^{\frac {2-\al}4,\frac12+}(J)} + \|V\|_{X_{\mathbb R}^{\frac {2-\al}4,\frac12+}(J)})^2.
\end{align*}
If $\ep$ is sufficiently small, the continuity argument with respect
to time gives the desired bound.
\end{proof}
\begin{lem}\label{error}
Let $e$ be a solution to the initial value problem $ie_t +
(-\Delta)^\frac {\al}2e = E,\;\; e(0) = 0$,  and let $\eta$ be the
smooth time cut-off function given in Lemma \ref{pertub}.   Then
\[\|\eta(t)e\|_{X_{\mathbb R}^{\frac {2-\al}4,\frac12+}} \lesssim \ep N^{-\al/2}.\]
\end{lem}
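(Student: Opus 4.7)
The approach is to reduce via the inhomogeneous $X^{s,b}$-estimate \eqref{nonli-est} to a bound on $\|\tilde\eta E\|_{L^2_tH^{(2-\al)/4}_x}$, and then exploit the oscillatory structure of $E$ together with a Taylor expansion of $R$.

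Applying \eqref{nonli-est} with $b=\frac12+\ep$ and $b'=-\frac12+\ep'$ for some $\ep'>\ep$, together with the trivial embedding $L^2_tH^s_x=X_{\mathbb R}^{s,0}\hookrightarrow X_{\mathbb R}^{s,-1/2+\ep'}$ and the boundedness of multiplication by a smooth time cutoff on $X^{s,b}$, the task reduces to proving
\[
\|\tilde\eta E\|_{L^2_tH^{(2-\al)/4}_x}\lesssim \ep N^{-\al/2}
\]
for a smooth bump $\tilde\eta$ equal to $1$ on $\mathrm{supp}(\eta)$. Set $A=(\al(\al-1)N^{\al-2}/2)^{1/2}$ so that $y=(x+\al N^{\al-1}t)/A$. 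A direct change of variable $x\mapsto Ay-\al N^{\al-1}t$ in the $x$-Fourier transform of $E$ gives
\[
(\mathcal F_xE)(t,\xi)=A\,e^{i[N^\al+\al N^{\al-1}(\xi-N)]t}\,R(A(\xi-N))\,(\mathcal F_xv)(t,A(\xi-N)),
\]
and then the substitution $\eta=A(\xi-N)$ yields
\[
\|\tilde\eta E\|_{L^2_tH^s_x}^2=A\int \langle N+\eta/A\rangle^{2s}|R(\eta)|^2\,\|\tilde\eta(\cdot)(\mathcal F_xv)(\cdot,\eta)\|_{L^2_t}^2\,d\eta.
\]

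The decisive input is a Taylor bound on $R$: the value $A^2=\al(\al-1)N^{\al-2}/2$ is chosen precisely so that the quadratic term in the Taylor expansion of $|N+\eta/A|^\al$ about $\eta=0$ cancels the $-\eta^2$ subtracted in \eqref{remain}. Hence
\[
|R(\eta)|\lesssim N^{-\al/2}|\eta|^3\ \ (|\eta|\le AN/2),\qquad |R(\eta)|\lesssim \eta^2\ \ (|\eta|>AN/2).
\]
Combined with $\|v\|_{L^\infty_tH^5_y}\lesssim \ep$ from \eqref{ill-sm}, splitting the $\eta$-integration into the shells $|\eta|\lesssim 1$, $1\lesssim|\eta|\lesssim AN$, and $|\eta|\gtrsim AN$ yields $\lesssim N^{-\al}\ep^2$ in each case. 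The central numerology is
\[
A\cdot N^{2s-\al}=N^{(\al-2)/2+(2-\al)/2-\al}=N^{-\al}\quad\text{at}\quad s=(2-\al)/4;
\]
for the tail, using $H^5$-decay to absorb the $\eta^4$ growth of $|R|^2$ gives the strictly better bound $N^{4-5\al}\ep^2\le N^{-\al}\ep^2$ since $\al>1$. Taking square roots completes the argument.

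The main obstacle is the exact cancellation of the $\eta^2$-term in $R$---which is the very reason for the choice of the rescaling factor $A$ in \eqref{changevar}---and verifying that the exponent arithmetic balances to give exactly the claimed $N^{-\al/2}$ at the critical index $s=(2-\al)/4$. The tail estimate for $|\eta|\gtrsim AN$ is routine given the $H^5$ smoothness from \eqref{ill-sm}.
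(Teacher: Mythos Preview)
Your argument is correct and reaches the same conclusion as the paper, but the treatment of the remainder $R$ differs in a useful way. After the common reduction to an $H^{(2-\al)/4}$ bound on $E$ (the paper does this via \eqref{nonli-est} and Lemma~\ref{inho-sob}, you via \eqref{nonli-est} and a direct Fourier computation---these are equivalent), the paper invests in proving the \emph{global} pointwise estimate $|R(\xi)|\le cN^{-\al/2}|\xi|^3$ for all $\xi\in\mathbb R$, which requires the somewhat laborious comparison of $f$ with the auxiliary cubics $g,h$ on the regions $\xi_2\le\xi<\xi_1$ and $\xi<\xi_2$. You instead use the cubic Taylor bound only on $|\eta|\le AN/2$, where it is immediate, and on the tail $|\eta|\gtrsim AN\sim N^{\al/2}$ you settle for the crude $|R(\eta)|\lesssim\eta^2$ and let the $H^5$ decay of $v$ from \eqref{ill-sm} absorb the loss. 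This is a legitimate shortcut: the paper's global cubic bound on $R$ is stronger than what is actually needed once the smoothness of $v$ is taken into account.

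One minor remark: your stated tail exponent $N^{4-5\al}$ does not match a direct computation (with $\langle N+\eta/A\rangle^{2s}\lesssim(|\eta|/A)^{2s}$ and the $H^5$ bound one gets $A^{-5}N^{2s-6}\ep^2\sim N^{-3\al}\ep^2$), but either exponent is $\le -\al$ for $\al>1$, so the conclusion is unaffected.
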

For the proof of this lemma, we make use of the following which is
in \cite{cct}.

\begin{lem}[Lemma 2.1 \cite{cct}] \label{inho-sob}
Let $-\frac 12 < s, \sigma > 0$, and $w \in H^\sigma(\mathbb{R})$.
For $M > 1, \tau > 0, x_0 \in \mathbb{R}$ and $A > 0$ let
\[\widetilde w (x) = Ae^{iMx}w(\frac {x - x_0}{\tau}).\]
\begin{enumerate}
\item Suppose that $s \geq 0$. Then there exists a constant $C_1 < \infty$,
 depending only on $s$, such that
\[\|\widetilde w\|_{H^s} \leq C_1|A|\tau^{1/2} M^s \|w\|_{H^s}\]
for all $w, A, x_0$ whenever $M\cdot\tau \geq 1$.
\item Suppose that $s < 0$ and that $\sigma \geq |s|$.
Then there exists a constant $C_1 < \infty$, depending only on $s$ and $\sigma$, such that
\[\|\widetilde w\|_{H^s} \leq C_1|A|\tau^{1/2}M^s\|w\|_{H^\sigma}\]
for all $w, A, x_0$ whenever $1 \leq \tau \cdot M^{1 + (s/\sigma)}$.
\item There exists $c_1 > 0$ such that for each $w$ there exists $C_w < \infty$ such that
\[\|\widetilde w\|_{H^s} \geq c_1|A|\tau^{1/2}M^s\|w\|_{L^2}\]
whenever $\tau \cdot M \geq C_w.$
\end{enumerate}
\end{lem}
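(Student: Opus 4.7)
The plan is to reduce all three inequalities to a single identity obtained from one linear change of variables on the Fourier side, and then to compare the weight $\langle\eta/\tau+M\rangle$ with $M\langle\eta\rangle$ pointwise. First, a direct computation gives
\[
\mathcal F\widetilde w(\xi)=A\tau\,e^{-ix_0(\xi-M)}\,\mathcal F w(\tau(\xi-M)),
\]
and the substitution $\eta=\tau(\xi-M)$ turns the defining integral for $\|\widetilde w\|_{H^s}^2$ into
\[
\|\widetilde w\|_{H^s}^2 \;\sim\; |A|^2\tau\int_{\mathbb R}\langle\eta/\tau+M\rangle^{2s}\,|\mathcal F w(\eta)|^2\,d\eta.
\]
The translation parameter $x_0$ disappears and the prefactor $|A|^2\tau$ already produces the $|A|\tau^{1/2}$ appearing in each conclusion, so what remains is a pointwise comparison of $\langle\eta/\tau+M\rangle^{2s}$ with $M^{2s}\langle\eta\rangle^{2s}$, $M^{2s}\langle\eta\rangle^{2\sigma}$, or $M^{2s}$ in the three respective cases.

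For part (1), the hypothesis $M\tau\ge1$ gives $1/\tau\le M$, whence
\[
|\eta/\tau+M|\le 1+M|\eta|+M\le C\,M\langle\eta\rangle.
\]
Raising this to the $2s$-th power (using $s\ge 0$) and integrating against $|\mathcal F w|^2$ yields the bound at once.

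Part (2) is the main obstacle: with $s<0$ we must bound $\langle\eta/\tau+M\rangle^{-2|s|}$ from above, which forces a lower bound on a quantity that can be small. I handle this by a dichotomy on the support of $\eta$. On the region $\{|\eta/\tau|\le M/2\}$ one has $\langle\eta/\tau+M\rangle\gtrsim M$, so $\langle\eta/\tau+M\rangle^{2s}\lesssim M^{2s}$, and the trivial estimate $1\le\langle\eta\rangle^{2\sigma}$ closes that piece. On the complement $\{|\eta/\tau|>M/2\}$ one has $|\eta|>M\tau/2$, so $\langle\eta\rangle^{2\sigma}\gtrsim(M\tau)^{2\sigma}$, and combined with the trivial upper bound $\langle\eta/\tau+M\rangle^{2s}\le 1$ the needed pointwise inequality
\[
M^{-2s}\le C\,(M\tau)^{2\sigma}
\]
reduces precisely to $\tau M^{1+s/\sigma}\gtrsim 1$, which is exactly the stated hypothesis (since $s/\sigma=-|s|/\sigma$).

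For part (3), given $w\ne 0$ I choose $R=R_w$ so that $\int_{|\eta|\le R}|\mathcal F w(\eta)|^2\,d\eta\ge\tfrac12\|\mathcal F w\|_{L^2}^2$, and set $C_w:=4R$. Whenever $\tau M\ge C_w$ and $|\eta|\le R$ one has $|\eta|/\tau\le M/4$, hence $\tfrac{3M}{4}\le|\eta/\tau+M|\le\tfrac{5M}{4}$, so $\langle\eta/\tau+M\rangle\sim M$ for $M\ge 1$ and therefore $\langle\eta/\tau+M\rangle^{2s}\gtrsim M^{2s}$ regardless of the sign of $s$. Restricting the integral identity above to $|\eta|\le R$ and applying Plancherel produces the lower bound $\|\widetilde w\|_{H^s}^2\gtrsim |A|^2\tau M^{2s}\|w\|_{L^2}^2$, which is the required estimate.
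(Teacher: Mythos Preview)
Your proof is correct. The Fourier transform computation and change of variables are accurate, the pointwise weight comparisons in parts (1) and (2) are valid, and the dichotomy in part (2) is exactly the right idea: on $\{|\eta|\le M\tau/2\}$ the modulation dominates and one uses $\langle\eta/\tau+M\rangle\gtrsim M$, while on the complement the dilation forces $\langle\eta\rangle\gtrsim M\tau$, and the hypothesis $\tau M^{1+s/\sigma}\ge 1$ is precisely what is needed to absorb $M^{-2s}$ into $\langle\eta\rangle^{2\sigma}$. The lower bound argument in part (3), localizing to a fixed compact frequency set where $|\eta|/\tau\le M/4$, is also clean and correct.

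As for comparison with the paper: the paper does not supply its own proof of this lemma at all. It is quoted verbatim as Lemma~2.1 from Christ--Colliander--Tao \cite{cct} and used as a black box in the proofs of Lemma~\ref{error} and Theorem~\ref{ill}. Your argument is essentially the standard one (and matches the proof in \cite{cct}): reduce to the Fourier-side integral identity and control the weight $\langle\eta/\tau+M\rangle^{2s}$ pointwise against $M^{2s}\langle\eta\rangle^{2s}$ or $M^{2s}\langle\eta\rangle^{2\sigma}$. There is no alternative route in the present paper to compare against.
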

\begin{proof}[Proof of Lemma \ref{error}]
Using \eqref{nonli-est} and Plancherel's theorem, we have
\begin{align*}
\|\eta(t)e\|_{X_{\mathbb R}^{\frac {2 - \al}4, \frac12+}} &\lesssim \|\eta(t)E\|_{X_{\mathbb R}^{\frac {2 - \al}4, -\frac12+}} = \|\langle \xi \rangle^{\frac {2 - \al}4} \langle \tau - |\xi|^\al \rangle^{-\frac12+} \widehat{\eta(t)E}\|_{L^2_{\tau,\xi}}\\
&\leq \|\langle \xi \rangle^{\frac {2 - \al}4} \widehat{\eta(t)E}\|_{L^2_{\tau,\xi}}  = \|\eta(t) \langle \xi \rangle^{\frac {2 - \al}4} \mathcal{F} E\|_{L^2_{t,\xi}}\\
&\leq \|\langle \xi \rangle^{\frac {2 - \al}4} \mathcal{F}E\|_{L^\infty_tL^2_\xi([0,2] \times \mathbb{R})}.
\end{align*}
It suffices to show
\[\sup_{0 \leq t \leq 2} \|E\|_{H^{\frac {2-\al}4}} \lesssim \ep N^{-\al/2}.\]
Since $E=e^{iNx} e^{iN^\al t} R(-i\partial_y) v(s,y)$, by Lemma
\ref{inho-sob} with $M = N, \tau = N^{\frac \al2 - 1}$ we see that
$\|E\|_{H^{\frac {2-\al}4}} \lesssim \|R(-i\partial_y)v\|_{H^{\frac
{2-\al}4}}$. Recalling that $R(\xi)$ is given by \eqref{remain}, it
suffices to show $\|R(-i\partial_y)v\|_{H^{\frac {2-\al}4}} \lesssim
\ep N^{-\al/2}$. Since $\|v\|_{H^{5}} \lesssim \ep$ by Theorem
\ref{ill-NLS}, we need only to show that
\begin{align}\label{error est}
\big|R(\xi)\big| \leq cN^{-\al/2}|\xi|^3\;\;\mbox{for all}\;\;N \gg 1.
\end{align}

Let $c_1 = \max \big(8\al (\frac {\al(\al - 1)}{2})^{- \frac 32},
\frac {2^{4 - \al}}{6} (2 - \al)(\frac {\al(\al - 1)}{2})^{-\frac
12}\big)$,\\ $c_2 = \max \big(8\al (\frac {\al(\al - 1)}{2})^{- \frac 32}, \frac {\big(\frac {\al(\al - 1)}{2}\big)^\frac 12}{\al} \big)$ and let $$f(\xi) = \big|(\frac {\al(\al - 1)}{2} N^{\al -
2})^{-\frac 12}\xi + N\big|^\al,\;\; P(\xi) =  N^\al + \frac {\al
N^{\al - 1}}{(\frac {\al(\al - 1)}{2}N^{\al - 2})^\frac 12}\xi +
\xi^2, $$ so that  $R(\xi)=f(\xi) - P(\xi)$. We also denote
$g(\xi)=-c_1N^{-\al/2}\xi^3 + P(\xi)$, $h(\xi) = c_2N^{-\frac \al2}\xi^3 + P(\xi)$. Then it suffices to show $\big|R(\xi)\big| \leq c_1N^{-\al/2}|\xi|^3$ on $\xi > \xi_1$ and $f \leq g, h \leq f$ on $\xi \leq \xi_1$ for some $\xi_1 < 0$. The following are easy to
check:
\begin{align*}
f'(\xi) &= \al \big|\big(\frac {\al(\al - 1)}{2} N^{\al - 2}\big)^{-\frac 12}\xi + N\big|^{\al - 2}\Big(\big(\frac {\al(\al - 1)}{2} N^{\al - 2}\big)^{-\frac 12}\xi + N\Big)\\
&\qquad \times \big(\frac {\al(\al - 1)}2 N^{\al - 2}\big)^{-\frac 12},\\
f''(\xi) &= 2\big|(\frac {\al(\al - 1)}2 N^{\al - 2})^{-\frac 12}\xi + N\big|^{\al - 2}N^{2 - \al},\\
f'''(\xi) &= 2(\al - 2)\big|(\frac {\al(\al - 1)}{2} N^{\al - 2})^{-\frac 12}\xi + N\big|^{\al - 4}\big((\frac {\al(\al - 1)}{2} N^{\al - 2})^{-\frac 12}\xi + N\big)\\
&\qquad \times N^{2 - \al} \big(\frac {\al(\al - 1)}{2} N^{\al - 2}\big)^{-\frac 12},\\
g'(\xi) &= -3c_1N^{-\frac \al2} \xi^2 + 2\xi + \frac {\al N^{\al - 1}}{(\frac {\al(\al - 1)}2 N^{\al - 2})^\frac 12},\quad g''(\xi) = -6c_1N^{-\frac \al2}\xi + 2,\\
h'(\xi) &= 3c_2N^{-\frac \al2}\xi^2 + 2\xi + \frac {\al N^{\al - 1}}{(\frac {\al(\al - 1)}2 N^{\al - 2})^\frac 12} > \frac 23 \times \frac {\al N^{\al - 1}}{(\frac {\al(\al - 1)}2 N^{\al - 2})^\frac 12},
\end{align*}
provided that the derivatives exist.

Let us set  $\xi_1 = -\frac 12 (\frac {\al(\al - 1)}{2} N^{\al -
2})^\frac 12 N$ and $\xi_2 = -2 (\frac {\al(\al - 1)}{2} N^{\al -
2})^\frac 12 N$. Then we consider separately three cases $\xi \ge
\xi_1;\;\; \xi_2 \leq \xi < \xi_1;\;\; \xi < \xi_2$. If $\xi \ge
\xi_1$, $f$ is three times differentiable and
$$|f'''(\xi)| \le | f'''(\xi_1)| = (2-\al)\Big(\frac {\al(\al - 1)}2\Big)^{- \frac 12} 2^{4 - \al} N^{-\al/2}.$$ Hence by Taylor's theorem, we get \eqref{error est}.
We need only to handle the remain two cases.

For both cases it is easy to show $h(\xi) \leq f(\xi).$  In fact,
observe that $h(\xi_1) \leq \big(\frac {\al(\al-1)}{8} + 1 - \frac
{3\al}{2} \big)N^\al \leq (\frac 12)^\al N^\al = f(\xi_1)$. Since
$f'$ is increasing,  $f'(\xi) \leq f'(\xi_1) = (\frac 12)^{\al -1
}\frac {\al N^{\al - 1}}{(\frac {\al(\al - 1)}2 N^{\al - 2})^\frac
12} \leq h'(\xi)$ for  $\xi \leq \xi_1$. Hence, $h(\xi) \leq f(\xi)$
if $\xi \leq \xi_1$.

To show that $f(\xi) \leq g(\xi)$ for $\xi_2 \leq \xi < \xi_1$,
observe that $ f(\xi_1) = \big(\frac N2\big)^\al  \le \big(\frac
\al2 + \frac {\al(\al - 1)}8 + 1\big)N^\al \leq g(\xi_1).$ Hence, it
suffices to show $f'(\xi) \geq g'(\xi)$ for $\xi_2 \leq \xi <
\xi_1$. Since $f'$ is increasing, $ f'(\xi) \geq f'(\xi_2) =
-\al\big(\frac {\al(\al - 1)}2\big)^{- \frac 12}N^{\frac \al 2}. $
Since $g'$ is increasing, $ g'(\xi) \leq g'(\xi_1) \leq -5\al
\big(\frac {\al(\al - 1)}2\big)^{-\frac 12}N^{\frac \al 2}. $ Hence,
$f'(\xi) \geq g'(\xi)$ for $\xi_2 \leq \xi < \xi_1$.

Finally, we show $f(\xi) \le g(\xi)$ for $\xi < \xi_2$. We note that
$f''(\xi) \le g''(\xi)$ and
$$
f(\xi_2) = N^\al \le (64\al + 2\al(\al - 1) - 2\al + 1)N^\al \le g(\xi_2).
$$
Since $f'(\xi_2) = -\al(\frac {\al(\al - 1)}{2})N^{\frac{\al}2} \geq
(-96 + 2\al)N^{\frac \al2} \ge g'(\xi_2)$, $f'(\xi) \geq g'(\xi)$.
This together with $f(\xi_2) \le g(\xi_2)$ gives $f(\xi) \le
g(\xi)$.
\end{proof}

Now we are ready to prove  Theorem \ref{ill}.
\begin{proof}[Proof of Theorem \ref{ill}]
Let $0 < \delta \ll \ep \ll 1$ and $T > 0$ be given. From Theorem
\ref{ill-NLS} we have two global solution $v_1, v_2$ with initial
data $\phi_1, \phi_2$, respectively, such that
\begin{align}\label{ill-1-NLS}
\|\phi_1\|_{H^s},\|\phi_2\|_{H^s} \lesssim \ep,
\end{align}
\begin{align}\label{ill-2-NLS}
\|\phi_1 - \phi_2\|_{H^s} \lesssim \delta,
\end{align}
\begin{align}\label{ill-3-NLS}
\sup_{0 \leq t \leq T}\|v_1(t) - v_2(t)\|_{H^s} \gtrsim \ep,
\end{align}
\begin{align}\label{ill-4-NLS}
\sup_{0 \leq t \leq \infty}\|v_1(t)\|_{H^5}, \|v_2(t)\|_{H^5} \lesssim \ep.
\end{align}
Define $V_1, V_2$ by
\begin{align}\label{appsol}
V_j(t,x) := e^{iNx} e^{iN^\al t} v_j(s,y),\;\; j= 1, 2,
\end{align}
where $(s,y)$ is given by \eqref{changevar}. And let $u_1, u_2$ be
smooth global solutions of \eqref{eq} with initial data $V_1(0,x),
V_2(0,x)$, respectively.

Now we rescale these solutions to have the conditions \eqref{ill-1},
\eqref{ill-2} and \eqref{ill-3} satisfied. Let $\lambda \gg 1$ be a
large parameter to be chosen later. For $j=1,2$, set
\[u_j^\lambda := \lambda u_j(\lambda^\al t, \lambda x),\,\,\, V_j^\lambda := \lambda V_j(\lambda^\al t, \lambda x).\]
 Thus we have
\[u_j^\lambda(0,x) = V_j^\lambda(0,x) = \lambda e^{iN\lambda x} e^{iN^\al t} v_j\Big(0, \frac {\lambda x + \al N^{\al - 1}t}{(\frac {\al(\al - 1)}2 N^{\al - 2})^\frac 12}\Big).\]
Lemma \ref{inho-sob} with $M = N\lambda, \tau = N^{\frac {\al - 2}2}
\lambda^{-1}$ implies that if $s \ge 0$,
\[\|u_j^\lambda(0)\|_{H^s} \lesssim \lambda^{s + 1/2} N^{s - (2 - \al)/4} \|v_j(0)\|_{H^1};\]
if $\frac {2 - 3\al}{4(\al + 1)} < s <0$,
\[\|u_j^\lambda(0)\|_{H^s} \lesssim \lambda^{s + 1/2} N^{s - (2 - \al)/4} \|v_j(0)\|_{H^1}.\]
We choose $\lambda = N^{((2 - \al)/4 - s)/(s + 1/2)}$. By
\eqref{ill-1-NLS} and \eqref{ill-2-NLS} we have
\[\|u_j^\lambda(0)\|_{H^s} \lesssim \ep, \,\,\,\|u_1^\lambda(0) - u_2^\lambda(0) \|_{H^s} \lesssim \delta.\]

Now we show \eqref{ill-3}. Rescaling gives
\begin{align*}
\|u_j^\lambda(t) - V_j^\lambda(t)\|_{H^s} &\lesssim \lambda^{\max(s,0) + 1/2}
\|u_j(\lambda^\al t) - V_j(\lambda^\al t)\|_{H^s}\\
 &\leq \lambda^{\max(s,0) + 1/2}\|u_j(\lambda^\al t) - V_j(\lambda^\al t)\|_{H^{(2-\al)/4}}.
 \end{align*}
Lemma \ref{pertub} and induction argument on time interval up to $\log N/\lambda^\al$ yield
\[\|u_j(\lambda^\al t) - V_j(\lambda^\al t)\|_{H^{(2-\al)/4}} \lesssim \ep N^{-\al/2 + \eta},\]
whenever $0 < t \ll \log N/\lambda^\al$. Hence we have
\[\|u_j^\lambda(t) - V_j^\lambda(t)\|_{H^s} \lesssim
\lambda^{\max(s,0) + 1/2}\ep N^{-\al/2 + \eta}.\] From the hypothesis
$\frac {2 - 3\al}{4(\al + 1)} < s$ it follows that, for a
sufficiently small $\eta>0$,
\[\|u_j^\lambda(t) - V_j^\lambda(t)\|_{H^s} \ll \ep.\]
Applying Lemma \ref{inho-sob} with $M = N\lambda, \tau = N^{\frac {\al - 2}2} \lambda^{-1}$, we have
\[\|u_j^\lambda(t)\|_{H^s} \leq \|u_j^\lambda(t) - V_j^\lambda(t)\|_{H^s} +  \|V_j^\lambda(t)\|_{H^s} \lesssim \ep + \|v_j(\lambda^\al t)\|_{H^s} \lesssim \ep.\]
From \eqref{ill-3-NLS}, we can find a time $t_0 > 0$ such that
$\|v_1(t_0) - v(t_0)\|_{L^2} \gtrsim \ep.$ Fixing $t_0$, we may
choose $N$ so large that $t_0 \ll \log N$. From \eqref{ill-4-NLS}
and Lemma \ref{inho-sob}, we get
\[\|V_1(t_0/\lambda^\al) - V_2(t_0/\lambda^\al)\|_{H^s} \sim \ep.\]
Choosing $N$ large enough, we can make $t_0/\lambda^\al < T$.
Therefore \eqref{ill-3} follows.
\end{proof}

\subsection*{Acknowledgments} Y. Cho was supported by the Research Funds of Chonbuk National University
2014, G. Hwang supported by NRF grant
2012R1A1A1015116, 2012R1A1B3001167 (Republic of Korea), S. Kwon  partially supported by NRF grant 2010-0024017
(Republic of Korea), and S. Lee  supported by NRF grant 2009-0083521
(Republic of Korea).


\begin{thebibliography}{00}





\bibitem{bst} N. Burq, P. Gerard, and N. Tzvetkov, {\it An instability property of the nonlinear Schr\"odinger equation on $S^d$}, Math. Res. Lett. 9 (2002), no. 2-3, 323--335.

\bibitem{cct} M. Christ, J. Colliander, T. Tao, {\it
Asymptotics, frequency modulation, and low regularity ill-
posedness for canonical defocusing equations}, Amer. J. Math. \textbf{125} (2003), no. 6, 1235-1293.

\bibitem{cct2} \bysame, {\it Instability of the periodic nonlinear Schr\"odinger equation}, preprint arXiv:math/0311227 (2003).





\bibitem{chho} Y. Cho, H. Hajaiej, G. Hwang, and T. Ozawa, {\it On the Cauchy problem of fractional Schr\"{o}dinger equation with Hartree type nonlinearity}, Funkcialaj Ekvacioj {\bf 56} (2013), 193-224.


\bibitem{chho2} \bysame, {\it On the orbital stability of fractional Schr\"odinger equations}, Comm. Pure Appl. Anal. \textbf{13} (2014), 1267-1282.


\bibitem{chkl1} \bysame, {\it Profile decompositions and blowup phenomena of mass critical fractional Schr\"odinger equations}, Nolinear Analysis \textbf{86} (2013), 12-29.



\bibitem{cholee} Y. Cho and S. Lee, {\it Strichartz estimates in spherical
coordinates}, Indiana Univ. Math. J. 62 (2013), no. 3, 991-1020.



\bibitem{cox} Y. Cho, T. Ozawa, S. Xia, {\it Remarks on some dispersive
estimates}, Commun. Pure Appl. Anal., {\bf 10} (2011), no. 4,
1121-1128.


\bibitem{det} S. Demirbas, M. B. Erdo\u{g}an and N. Tzirakis, {\it Existence and uniqueness theory for the fractional Schr\"odinger equation on the torus}, preprint arXiv:1312.5249.







\bibitem{gh} B, Guo and Z. Huo, {\it Global Well-Posedness for the Fractional Nonlinear Schrodinger Equation}, Comm. Partial Differential Equations \textbf{36} (2010),  247-255.






\bibitem{iopu}  A. D. Ionescu and F. Pusateri, {\it Nolinear fractional Schr\"odinger equations in one dimension}, J. Func. Anal. \textbf{266} (2014), 139-176.


\bibitem{kwon} S. Kwon, {\it Well-posedness and ill-posedness of the fifth-order modified KdV equations}, Elec. J. Diff. Eqns, 2008(2008), no.1, 1--15.








\bibitem{la1}  N, Laskin, {\it Fractional quantum mechanics and L\'{e}vy path
integrals}, Phys. Lett. A, \textbf{268} (2000), 298-305.






\bibitem{mol} L. Molinet, {\it On ill-posedness for the one-dimensional periodic cubic Schr\"odinger equation}, Math. Res. Lett. \textbf{16} (2009), 111-120.






\bibitem{tao} T. Tao, {\it Multilinear weighted convolution of $L^2$ functions, and applications to nonlinear dispersive equations}, Amer. J. Math. 123 (2001), no. 5, 839-908.


\end{thebibliography}
\end{document}